\theoremstyle{plain}
\newtheorem{theorem}{Theorem}[section]
\newtheorem{lemma}[theorem]{Lemma}
\theoremstyle{definition}
\newtheorem{remark}[theorem]{Remark}
\renewcommand{\Im}{\operatorname{Im}}
\renewcommand{\Re}{\operatorname{Re}}
\def\sgn{\mathop{\rm sign}\nolimits}
\def\sgn{\mathop{\rm sign}\nolimits}
\def\sgn{\mathop{\rm sign}\nolimits}
\begin{document}

\title{Direct and inverse spectral problems for a class
of non-selfadjoint band matrices}

\author{Natalia Bebiano and Mikhail Tyaglov}



\maketitle

 \maketitle

\abstract{The spectral properties of a class of  band matrices are
investigated. The reconstruction of matrices of this special class
from given spectral data is also studied.
Necessary and sufficient conditions for that reconstruction are
found. The obtained results extend some results on the direct and
inverse spectral problems for periodic Jacobi matrices and for some
non-self-adjoint tridiagonal matrices.

\setcounter{equation}{0}

AMS: {65 F15, 15A18}


Keywords: {
Inverse eigenvalue problem, stable polynomials, Hermite-Biehler
theorem, Jacobi matrix,  periodic Jacobi matrix}

\section{Introduction}

Inverse eigenvalue problems arise in mathematics as well as in many
areas of engineering and science such as chemistry, geology, physics
etc. Often the mathematical model describing a certain physical
system involves matrices whose spectral data allow the prediction of
the behavior of the system. Determining the spectra of those
matrices is the so-called {\it direct problem}, while the {\it
inverse problem} consists in the reconstruction of the matrices from
the knowledge of the behavior of the system, frequently expressed by
spectral data.

Inverse eigenvalue problems, in general, and for structured matrices,
in particular,  have attracted attention of many researchers,
some of them motivated by the numerous applications of this
scientific area (see e.g. \cite{akhiezer}, \cite{G}).
The mathematical background employed in those investigations may
involve rather sophisticated techniques such as algebraic curves,
functional analysis, matrix theory, etc. (see \cite{vM}, \cite{BG1},
\cite{BG2}, \cite{AB}, \cite{Teschl} and the references therein).

Inverse eigenvalue problems  for band matrices have been actively investigated,
e.g. see \cite{BG1} and their references. The inverse spectral problem for a
{\it  periodic Jacobi matrix}, that is, a real symmetric matrix of the form

\begin{equation}\label{main.matrix}
L_n=\begin{pmatrix}
    a_1 & b_1 &  0 &\dots&   0   & b_n \\
    b_1 & a_2 &b_2 &\dots&   0   & 0 \\
     0  &b_2 & a_3 &\dots&   0   & 0 \\
    \vdots&\vdots&\vdots&\ddots&\vdots&\vdots\\
     0  &  0  &  0  &\dots&a_{n-1}& b_{n-1}\\
     b_n  &  0  &  0  &\dots&b_{n-1}&a_n\\
\end{pmatrix}, \qquad b_i>0,
\end{equation}
deserved the attention of researchers, see \cite{BG2,F,AB,XJO} and
their references. These matrices appear in studies of the periodic
Toda lattice, inverse eigenvalue problems for Sturm-Liouville
equations and Hill's equation \cite{BG1,F}. If $b_n=0$, the the matrices $L_n$
of the form~\eqref{main.matrix} reduce to tridiagonal symmetric matrices called the {\it Jacobi
matrices}. The Jacobi matrices motivated intensive study  as an useful tool in the
investigation of orthogonal polynomials, in the theory of continued
fractions, and in numerical analysis \cite{W,B1,B}. Namely, the inverse
problems for {Jacobi matrices} have been an intensive topic of
research since the seminal papers by Hochstadt and Hald
\cite{hochstadt,hald} in the seventies of the last century.

In the present work, we study spectral properties of  complex
matrices of the form
\begin{equation}\label{main.matrix.2}
J_n=\begin{pmatrix}
c_1 & b_1 &  0 &\dots&   0   & \overline{b}_n \\
\overline{b}_1 & c_2 &b_2 &\dots&   0   & 0 \\
0  &\overline{b}_2 & c_3 &\dots&   0   & 0 \\
\vdots&\vdots&\vdots&\ddots&\vdots&\vdots\\
0  &  0  &  0  &\dots&c_{n-1}& b_{n-1}\\
b_n  &  0  &  0  &\dots&\overline{b}_{n-1}&a_n\\
\end{pmatrix},
\end{equation}
where $b_1,\ldots,b_{n-1},b_n\in\mathbb{C}\backslash\mathbb{R},
c_1,\ldots,c_{n-1}\in\mathbb{R}$, and $a_n\in\mathbb{C}$, and solve
the direct problem for such matrices. (Here $\overline{z}$ means the
complex conjugate of $z$.) The  matrices of the
form~\eqref{main.matrix.2} constitute the class $\mathcal{J}_n$.

We also solve the inverse spectral problem for matrices from the subclass
$\widehat{\mathcal{J}}_n$ of the class $\mathcal{J}_n$. This class consists of
the matrices of the form
\begin{equation}\label{main.matrix.2.subclasss}
\widehat{J}_n=\begin{pmatrix}
\widehat{c}_1 & \widehat{b}_1 &  0 &\dots&   0   & \overline{\widehat{b}}_n \\
\widehat{b}_1 & \widehat{c}_2 &\widehat{b}_2 &\dots&   0   & 0 \\
0  &\widehat{b}_2 & \widehat{c}_3 &\dots&   0   & 0 \\
\vdots&\vdots&\vdots&\ddots&\vdots&\vdots\\
0  &  0  &  0  &\dots&\widehat{c}_{n-1}& \widehat{b}_{n-1}\\
\widehat{b}_n  &  0  &  0  &\dots&\widehat{b}_{n-1}&\widehat{a}_n\\
\end{pmatrix},
\end{equation}
where $\widehat{b}_1,\ldots,\widehat{b}_{n-1}, \widehat{c}_1,\ldots,\widehat{c}_{n-1}\in\mathbb{R}$,
and $\widehat{a}_n,\widehat{b}_n\in\mathbb{C}$, $\widehat{b}_n\neq0$.

Note that in~\cite{Arlinskii_Tsekhanovskii} (see
also~\cite{Arlinskii_Tsekhanovskii2}), Arlinskii and Tsekhanovskii
considered the matrices of the form~\eqref{main.matrix.2.subclasss}
with $b_n=0$, $a_n\in\mathbb{C}\backslash\mathbb{R}$, and solved the
direct and inverse eigenvalue  problems for
those matrices. In~\cite{XJO}, the direct and inverse spectral
problems for the matrices the form~\eqref{main.matrix.2.subclasss}
with $b_n\in\mathbb{R}\backslash\{0\}$ and $a_n\in\mathbb{R}$ (that is, for the matrices of the form~\eqref{main.matrix})
were solved, and necessary and sufficient conditions for solvability of the inverse problem
were found.

Recall that in~\cite{XJO}, it was established that the necessary and sufficient conditions for the inverse spectral problem
for the matrices of the form~\eqref{main.matrix} to be solvable are
\begin{equation}\label{Chinese_inequality}
\prod\limits_{j=1}^{n}|\mu_k-\lambda_j|\geqslant(-1)^{n-k-1}\beta,\qquad k=1,\ldots,n-1.
\end{equation}
Here $\beta=b_1\cdots b_n$ and the sets $\{\lambda_j\}_{j=1}^n$ and
$\{\mu_k\}_{k=1}^{n-1}$ are the spectra of the
matrix~\eqref{main.matrix} and its  $(n-1)\times(n-1)$ leading
principal submatrix, respectively.

In~\cite{Arlinskii_Tsekhanovskii} (see also~\cite{Arlinskii_Tsekhanovskii2}), it was established
that any matrix of the form~\eqref{main.matrix.2.subclasss} with $\widehat{b}_n=0$ and $\Im a_n>0$
has its eigenvalues in the open half-plane of the complex plane. The inverse spectral problem
of such matrices was also solved in~\cite{Arlinskii_Tsekhanovskii}.

In the present work, we extend the results of the work~\cite{XJO} and find necessary and sufficient conditions
for solvability of the inverse spectral problem for the matrices from the class $\widehat{\mathcal{J}}_n$.
Those conditions are also necessary for the matrices from the class $\mathcal{J}_n$ and generalize the
inequality~\eqref{Chinese_inequality}. Also we extended the results of the paper~\cite{Arlinskii_Tsekhanovskii}
to the matrices from the class $\mathcal{J}_n$.

The paper is organized as follows. In Section~\ref{section:preliminaries}, we
survey some general properties of the real symmetric tridiagonal matrices.
Section~\ref{section:spectrum.odd.even} is devoted to the study of
spectral properties of matrices from the class $\mathcal{J}_n$. We study
the eigenvalue location for real and nonreal numbers $\beta=b_1\cdots b_n$
and $a_n$, and find a necessary condition for the spectra of those matrices.
In Section~\ref{section:inverse.problems.even.odd}, we show that for any
matrix from the class $\mathcal{J}_n$, there exists a matrix
from the class $\widehat{\mathcal{J}}_n$ with the same spectral data.
In this section, we solve the inverse spectral problem for the matrices
from the class $\widehat{\mathcal{J}}_n$, and establish that the necessary
condition obtained in Section~\ref{section:spectrum.odd.even} is also sufficient
for the inverse spectral problem to be solvable.

In Sections~\ref{section:spectrum.odd.even}
and~\ref{section:inverse.problems.even.odd},
we follow the approach developed in~\cite{XJO}.
However, we simplify the substantiation of their
technique and extend it to a more wide class
of matrices.

\setcounter{equation}{0}
\section{Preliminaries}\label{section:preliminaries}

Let us denote by $J_{k}$, $k=1,\ldots,n-1$, the $k$th principal
submatrix of $J_n$, and consider the matrix~$J_{n-1}$
\begin{equation}\label{main.submatrix.1.nonreal}
J_{n-1}=\begin{pmatrix}
c_1 & b_1 &  0 &\dots&   0   & 0 \\
\overline{b}_1 & c_2 &b_2 &\dots&   0   & 0 \\
0  &\overline{b}_2 & c_3 &\dots&   0   & 0 \\
\vdots&\vdots&\vdots&\ddots&\vdots&\vdots\\
0  &  0  &  0  &\dots&c_{n-2}& b_{n-2}\\
0  &  0  &  0  &\dots&\overline{b}_{n-2}&c_{n-1}\\
\end{pmatrix}.
\end{equation}

This matrix is self-adjoint, so its characteristic polynomial
\begin{equation*}\label{main.submatrix.1.char.poly}
\chi_{n-1}(\lambda)=\det(\lambda I_{n-1}-J_{n-1})
\end{equation*}
has only real zeroes, the eigenvalues of the matrix $J_{n-1}$. Here
$I_{n-1}$ is the identity matrix of size $n-1$. Note that the
spectrum of $J_{n-1}$ coincides with the spectrum of the
matrix~(cf.\cite{KreinGantmacher})
\begin{equation}\label{main.submatrix.1.real}
\widetilde{J}_{n-1}=\begin{pmatrix}
c_1 & |b_1| &  0 &\dots&   0   & 0 \\
|b_1| & c_2 &|b_2| &\dots&   0   & 0 \\
0  &|b_2| & c_3 &\dots&   0   & 0 \\
\vdots&\vdots&\vdots&\ddots&\vdots&\vdots\\
0  &  0  &  0  &\dots&c_{n-2}& |b_{n-2}|\\
0  &  0  &  0  &\dots&|b_{n-2}|&c_{n-1}\\
\end{pmatrix}.
\end{equation}
But this matrix is  a real Jacobi matrix so its
eigenvalues are simple. Thus we obtain that the spectrum of the
matrix $J_{n-1}$, $\sigma(J_{n-1})=\{\mu_1,\ldots,\mu_{n-1}\}$, is
real and simple~(e.g., see \cite{KreinGantmacher}):
\begin{equation*}
\mu_1<\mu_2<\cdots<\mu_{n-1},
\end{equation*}
so
\begin{equation}\label{main.submatrix.2.spectrum.signs}
\sgn\left(\chi'_{n-1}(\mu_k)\right)=(-1)^{n-k-1},
\end{equation}
where $\chi'_{n-1}(\lambda)$ is the derivative of the polynomial $\chi_{n-1}(\lambda)$

By $\mathbf{u}_{k}=(u_{k1},\ldots,u_{k,n-1})^{T}\in\mathbb{C}^{n-1}$ we
denote the eigenvector of $J_{n-1}$ corresponding to the eigenvalue
$\mu_k$, $k=1,\ldots,n-1$ such that
\begin{equation}\label{orthonormality}
\overline{\mathbf{u}}_{j}^{T}\mathbf{u}_{k}=\delta_{jk},
\end{equation}
where $\delta_{jk}$ is the Kronecker symbol. Then the {\it resolvent} of
the matrix $J_{n-1}$ has the form
\begin{equation}\label{submatrix.resolvent}
(\lambda I_{n-1}-J_{n-1})^{-1}=
\sum\limits_{k=1}^{n-1}\dfrac{\mathbf{u}_k\overline{\mathbf{u}}_k^{T}}{\lambda-\mu_k}.
\end{equation}

It is easy to see that
\begin{equation*}
\mathbf{e}_{1}^{T}(\lambda I_{n-1}-J_{n-1})^{-1}\textbf{e}_{n-1}=\dfrac{b_1b_2\cdots b_{n-2}}{\chi_{n-1}(\lambda)},
\end{equation*}
where $\textbf{e}_1=(1,0,\ldots,0,0)^{T}\in\mathbb{C}^{n-1}$ and $\textbf{e}_{n-1}= (0,0,\ldots,0,1)^{T}\in\mathbb{C}^{n-1}$.
The formula~\eqref{submatrix.resolvent} gives us
\begin{equation*}
\dfrac{b_1\cdots b_{n-2}}{\chi_{n-1}(\lambda)}=\sum\limits_{k=1}^{n-1}\dfrac{u_{k1}\overline{u}_{k,n-1}}{\lambda-\mu_k},
\end{equation*}
so we obtain
\begin{equation}\label{formula.vectors}
b_nu_{k1}b_{n-1}\overline{u}_{k,n-1}=\dfrac{\beta}{\chi'_{n-1}(\mu_k)},\quad k=1,\ldots,n-1,
\end{equation}
where
\begin{equation}\label{beta}
\beta=b_1b_2\cdots b_n\neq0.
\end{equation}
In particular, $u_{k1}\neq0$ and $u_{k,n-1}\neq0$.

The formula~\eqref{formula.vectors} implies that
\begin{equation}\label{formula.vectors.abs}
|b_nu_{k1}|^2|b_{n-1}\overline{u}_{k,n-1}|^2=\dfrac{|\beta|^2}{[\chi'_{n-1}(\mu_k)]^2},\quad
k=1,\ldots,n-1.
\end{equation}
%


 \setcounter{equation}{0}
\section{Spectral properties of the matrices in ${\cal J}_n$}\label{section:spectrum.odd.even}

In this section, we characterize the spectra of the matrices in~${\cal J}_n$. Note first that any matrix $J_n$
of the form~\eqref{main.matrix.2} can be represented as follows
\begin{equation*}\label{main.submatrix.2}
J_n=
\begin{pmatrix}
J_{n-1} & \mathbf{y}\\
\overline{\mathbf{y}}^{T} & a_n\\
\end{pmatrix},
\end{equation*}
where $\mathbf{y}=(\overline{b}_n,0,0,\ldots,0,b_{n-1})^{T}\in\mathbb{C}^{n-1}$.
By the \emph{Schur determinant formula}~\cite{Zhang}, we obtain~(cf.~\cite{XJO})
\begin{equation}\label{main.rat.func.0}
\dfrac{\chi_n(\lambda)}{\chi_{n-1}(\lambda)}=\lambda-a_n-
\overline{\mathbf{y}}^T(\lambda I_{n-1}-J_{n-1})^{-1}\mathbf{y},
\end{equation}
where $\chi_n(\lambda)$ and $\chi_{n-1}(\lambda)$ are the characteristic polynomials
of the matrices $J_{n}$ and $J_{n-1}$, respectively:
\begin{equation*}
\chi_n(\lambda)=\det(\lambda I_{n}-J_{n}),\quad \chi_{n-1}(\lambda)=\det(\lambda I_{n-1}-J_{n-1}).
\end{equation*}
Furthermore, the formula~\eqref{submatrix.resolvent} implies
\begin{equation}\label{submatrix.resolvent.0}
\overline{\mathbf{y}}^T(\lambda
I_{n-1}-J_{n-1})^{-1}\mathbf{y}=\sum\limits_{k=1}^{n-1}\dfrac{|\overline{\mathbf{y}}^{T}\mathbf{u}_k|^2}{\lambda-\mu_k}=
\sum\limits_{k=1}^{n-1}\dfrac{|b_nu_{k1}+\overline{b}_{n-1}u_{k,n-1}|^2}{\lambda-\mu_k}.
\end{equation}
From~\eqref{main.rat.func.0} and~\eqref{submatrix.resolvent.0} we get
\begin{equation}\label{main.rat.func}
\dfrac{\chi_n(\lambda)}{\chi_{n-1}(\lambda)}=\lambda-a_n-
\sum\limits_{k=1}^{n-1}\dfrac{\alpha_k}{\lambda-\mu_k},
\end{equation}
where
\begin{equation}\label{residues.0}
\alpha_k=-\dfrac{\chi_{n}(\mu_k)}{\chi'_{n-1}(\mu_k)}=|b_nu_{k1}+\overline{b}_{n-1}u_{k,n-1}|^2\geqslant0
\end{equation}
or
\begin{equation}\label{residues}
\alpha_k=|b_nu_{k1}|^2+|b_{n-1}u_{k,n-1}|^2+2\Re(b_nu_{k1}b_{n-1}\overline{u}_{k,n-1})\geqslant0.
\end{equation}
Note that~\eqref{residues.0}--\eqref{residues} and~\eqref{main.submatrix.2.spectrum.signs} imply that
\begin{equation}\label{signs.chi_n}
(-1)^{n-k}\chi_{n}(\mu_k)\geqslant0.
\end{equation}

Thus, the function
\begin{equation*}
R_{n-1}(\lambda)=\lambda-\sum\limits_{k=1}^{n-1}\dfrac{\alpha_k}{\lambda-\mu_k}
\end{equation*}
maps the upper half-plane of the complex plane to itself, since
\begin{equation*}
\sgn\Im\dfrac{-\alpha_k}{\lambda-\mu_k}=\sgn\Im\dfrac{-\alpha_k(\overline{\lambda}-\mu_k)}{|\lambda-\mu_k|^2}=\sgn\Im\lambda.
\end{equation*}
Therefore, the zeroes and poles of $R_{n-1}(\lambda)$ are real,
simple and interlacing (see e.g.~\cite{Tyaglov} and references
therein), so~\eqref{main.rat.func} can be represented as follows
\begin{equation}\label{main.rat.func.222}
\dfrac{\chi_n(\lambda)}{\chi_{n-1}(\lambda)}=R_{n-1}(\lambda)-a_n.
\end{equation}
By~\eqref{residues.0}, $\chi_{n}(\mu_j)=0$ for some $j$, $j=1,\ldots,n-1$, if and only if $\alpha_j=0$, or
equivalently, if and only if
\begin{equation*}
b_nu_{k1}+\overline{b}_{n-1}u_{k,n-1}=0.
\end{equation*}

Let $N=\{j_1,\ldots,j_m\}\subset\{1,2,\ldots,n-1\}$ be the set of
indices such that $\alpha_j=0$ for $j\in N$. Then the function
$R_{n-1}(\lambda)$ has the form
\begin{equation}\label{main.rat.func.general.case}
R_{n-1}(\lambda)=\lambda-\sum\limits_{\substack{k=1\\k\not\in N}}^{n-1}\dfrac{\alpha_k}{\lambda-\mu_k},
\end{equation}
and the polynomial $\chi_n(\lambda)$ has $m$ zeroes in common with
$\chi_{n-1}(\lambda)$, $\mu_{j_1}$, $\mu_{j_2}$, \ldots,
$\mu_{j_m}$ while the its other zeroes are the
solutions of the equation
\begin{equation}\label{main.equation}
R_{n-1}(\lambda)=a_n.
\end{equation}

If $a_n\in\mathbb{R}$, then the function $R_{n-1}(\lambda)-a_n$ maps the upper half-plane to itself,
so the solutions of the equation~\eqref{main.equation} are real and simple and interlace the
numbers~$\mu_k$, $k\in\{1,2,\ldots,n-1\}\backslash N$. However, they may coincide with some
numbers $\mu_j$, $j\in N$. So for $a_n\in\mathbb{R}$, the eigenvalues of $J_n$ are real and
of multiplicity at most two. Multiple eigenvalues are always eigenvalues of $J_{n-1}$.
This property of the spectrum of $J_n$ with $b_k\in\mathbb{C}$, $k=1,2\ldots,n$ is the same as
for real $b_k$, e.g.~\cite{XJO}.

Suppose now that $a_n$ is nonreal, and recall that
$R_{n-1}(\lambda)$ maps the upper (lower) half-plane to itself.
Therefore, all solutions of the equation~\eqref{main.equation} lie
in the upper (lower) half-plane of the complex plane whenever
$\Im{a_n}>0$ ($\Im{a_n}<0$), and cannot be real since the functions
mapping the upper half-plane to itself are always real on the real
line (see e.g.~\cite{Tyaglov} and references there). Note that the
famous Hermite-Biehler theorem can be proved by the same technique,
see~\cite{Obreschkoff}.

Finally, we note that the formul\ae~\eqref{residues.0} and~\eqref{formula.vectors} imply that
\begin{equation}\label{residues.1}
\begin{array}{c}
\alpha_k=\dfrac{[|b_1u_{k1}|^2+\Re(b_nu_{k1}b_{n-1}\overline{u}_{k,n-1})]^2+[\Im(b_nu_{k1}b_{n-1}\overline{u}_{k,n-1})]^2}{|b_1u_{k1}|^2}=\\
\\
=\dfrac{[\chi'_{n-1}(\mu_k)|b_1u_{k1}|^2+\Re\beta]^2+(\Im\beta)^2}{|b_1u_{k1}|^2\left[\chi'_{n-1}(\mu_k)\right]^2}.
\end{array}
\end{equation}
This formula shows that if $\Im\beta\neq0$, then $\alpha_k\neq0$
for any $k=1,\ldots,n-1$. At the same time, if $\beta\in\mathbb{R}\backslash\{0\}$, then
\begin{equation*}\label{residues.101}
\alpha_k=\dfrac{[\chi'_{n-1}(\mu_k)|b_1u_{k1}|^2+\beta]^2}{|b_1u_{k1}|^2\left[\chi'_{n-1}(\mu_k)\right]^2}.
\end{equation*}
Therefore, for real nonzero $\beta$, the number $\alpha_k$ equals zero if and only if
$\chi'_{n-1}(\mu_k)|b_1u_{k1}|^2+\beta=0$, that is equivalent to the following equality
\begin{equation*}\label{residues.1011}
|\chi'_{n-1}(\mu_k)||b_1u_{k1}|^2=(-1)^{n-k}\beta,
\end{equation*}
by~\eqref{main.submatrix.2.spectrum.signs}. This equality
and~\eqref{signs.chi_n} give us:

\noindent for $\beta>0$
\begin{equation}\label{signs.chi_n.beta.pos}
\begin{cases}
&(-1)^{n-k}\chi_n(\mu_k)>0,\qquad k=n-1,n-3,n-5,\ldots,\\
&(-1)^{n-k}\chi_n(\mu_k)\geqslant0,\qquad k=n-2,n-4,n-6,\ldots,\\
\end{cases}
\end{equation}

\noindent and for $\beta<0$
\begin{equation}\label{signs.chi_n.beta.neg}
\begin{cases}
&(-1)^{n-k}\chi_n(\mu_k)\geqslant0,\qquad k=n-1,n-3,n-5,\ldots,\\
&(-1)^{n-k}\chi_n(\mu_k)>0,\qquad k=n-2,n-4,n-6,\ldots.\\
\end{cases}
\end{equation}

If additionally $a_n\in\mathbb{R}$, then the eigenvalues of the
matrices $J_n$ and $J_{n-1}$ are distributed as follows:

\vspace{3mm}

\noindent for $\beta<0$
\begin{equation}\label{residues.10111}
\cdots\leqslant\lambda_{n-5}<\mu_{n-4}<\lambda_{n-4}\leqslant\mu_{n-3}\leqslant\lambda_{n-2}
<\mu_{n-2}<\lambda_{n-1}\leqslant\mu_{n-1}\leqslant\lambda_n,
\end{equation}

\noindent and for $\beta>0$
\begin{equation}\label{residues.10112}
\cdots<\lambda_{n-5}\leqslant\mu_{n-4}\leqslant\lambda_{n-4}<\mu_{n-3}<\lambda_{n-2}
\leqslant\mu_{n-2}\leqslant\lambda_{n-1}<\mu_{n-1}<\lambda_n.
\end{equation}

Thus, we come to the following statements.

\begin{theorem}
Let $\beta$ be nonreal. If $a_n\in\mathbb{R}$, then all the
eigenvalues of the matrix $J_n$ defined in~\eqref{main.matrix.2} are
real and simple and interlace the eigenvalues of~$J_{n-1}$, which
are real and simple as well.

If $a_n\in\mathbb{C}\backslash\mathbb{R}$, then all the eigenvalues
of the matrix $J_n$ lie in the \textbf{open} upper (lower)
half-plane of the complex plane whenever $\Im{a_n}>0$
$(\Im{a_n}<0)$.

Moreover, for complex $a_n$ (real or nonreal) the characteristic polynomial of the matrix $J_n$
satisfies the inequalities
\begin{equation*}
(-1)^{n-k}\chi_n(\mu_k)>0,\qquad k=1,\ldots,n-1.
\end{equation*}
\end{theorem}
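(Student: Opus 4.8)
The plan is to show that all three assertions fall out quickly from the material already assembled, once we extract the decisive consequence of $\beta$ being nonreal. First I would read off from formula~\eqref{residues.1} that each residue equals $\alpha_k=\dfrac{[\chi'_{n-1}(\mu_k)|b_1u_{k1}|^2+\Re\beta]^2+(\Im\beta)^2}{|b_1u_{k1}|^2[\chi'_{n-1}(\mu_k)]^2}$, whose denominator is strictly positive (the eigenvalues of $J_{n-1}$ are simple, so $\chi'_{n-1}(\mu_k)\neq0$, and $u_{k1}\neq0$ by~\eqref{formula.vectors}) while its numerator is bounded below by $(\Im\beta)^2>0$. Hence $\alpha_k>0$ for every $k$, the exceptional set $N$ is empty, and by~\eqref{main.rat.func.222} together with the discussion of~\eqref{main.equation} the eigenvalues of $J_n$ are exactly the $n$ solutions of $R_{n-1}(\lambda)=a_n$, where $R_{n-1}$ carries a positive residue at each of its $n-1$ poles $\mu_k$.

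For the case $a_n\in\mathbb{R}$ I would invoke the fact, established just before the statement, that $R_{n-1}(\lambda)-a_n$ maps the upper half-plane into itself. As a real rational function of this Nevanlinna type whose numerator exceeds its denominator in degree by one (indeed $R_{n-1}(\lambda)-a_n=\chi_n(\lambda)/\chi_{n-1}(\lambda)$), it has only real simple zeros, and these strictly interlace its poles $\mu_k$; since $N=\emptyset$ no zero can collide with a $\mu_k$. This produces the $n$ real simple eigenvalues of $J_n$ strictly interlacing the real simple eigenvalues $\mu_1<\cdots<\mu_{n-1}$ of $J_{n-1}$, the latter already recorded in the Preliminaries.

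For $a_n\in\mathbb{C}\setminus\mathbb{R}$ I would run the Hermite--Biehler-type argument already sketched: $R_{n-1}$ sends the upper half-plane into itself and the lower half-plane into itself and is real on $\mathbb{R}$, so if $\Im a_n>0$ a solution of $R_{n-1}(\lambda)=a_n$ can lie neither on $\mathbb{R}$ (there $R_{n-1}$ is real) nor in the lower half-plane (there $\Im R_{n-1}<0$); hence every eigenvalue of $J_n$ lies in the open upper half-plane, and symmetrically for $\Im a_n<0$.

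Finally, the inequality $(-1)^{n-k}\chi_n(\mu_k)>0$ follows by combining $\chi_n(\mu_k)=-\alpha_k\chi'_{n-1}(\mu_k)$ from~\eqref{residues.0} with the sign rule~\eqref{main.submatrix.2.spectrum.signs}, $\sgn\chi'_{n-1}(\mu_k)=(-1)^{n-k-1}$, which yields $(-1)^{n-k}\chi_n(\mu_k)=\alpha_k|\chi'_{n-1}(\mu_k)|$; the strict positivity of $\alpha_k$ proved above upgrades the nonstrict bound~\eqref{signs.chi_n} to a strict one, regardless of whether $a_n$ is real. The only point demanding genuine care is this strict positivity of all $\alpha_k$, since everything else is downstream of it: the interlacing and half-plane statements are routine consequences of the Nevanlinna mapping property, and the mild remaining obstacle is merely to phrase the interlacing cleanly and to confirm that the empty set $N$ excludes any multiple eigenvalue.
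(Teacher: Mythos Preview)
Your proposal is correct and follows essentially the same route as the paper: the theorem is presented there as a summary of the preceding discussion, and the key step---reading off from~\eqref{residues.1} that $\Im\beta\neq0$ forces every $\alpha_k>0$, hence $N=\emptyset$---is exactly what you isolate. The remaining deductions (Nevanlinna interlacing for real $a_n$, the half-plane argument for nonreal $a_n$, and the upgrade of~\eqref{signs.chi_n} to a strict inequality via $\chi_n(\mu_k)=-\alpha_k\chi'_{n-1}(\mu_k)$) mirror the paper's reasoning in~\eqref{main.rat.func.222}--\eqref{residues.1} almost verbatim.
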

\begin{theorem}
Let $\beta\in\mathbb{R}$. If $a_n\in\mathbb{R}$, then all the
eigenvalues of the matrix $J_n$ given in~\eqref{main.matrix.2} are
real and of multiplicity at most $2$. Any multiple eigenvalue of
$J_n$ is an eigenvalue of~$J_{n-1}$.

If $a_n\in\mathbb{C}\backslash\mathbb{R}$, then all the eigenvalues
of the matrix $J_n$ lie in the \textbf{closed} upper (lower)
half-plane of the complex plane whenever $\Im{a_n}>0$
$(\Im{a_n}<0)$. An eigenvalue of~$J_n$ is real if and only if it is
an eigenvalue of $J_{n-1}$.

Moreover, the eigenvalues of $J_n$ and $J_{n-1}$ are distributed as in~\eqref{residues.10111}--\eqref{residues.10112}
for real $a_n$, or satisfy the inequalities~\eqref{signs.chi_n.beta.pos}--\eqref{signs.chi_n.beta.neg} for any complex $a_n$.
\end{theorem}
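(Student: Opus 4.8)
The plan is to read the entire statement off the factorization established above, namely
\[
\chi_n(\lambda)=\bigl(R_{n-1}(\lambda)-a_n\bigr)\,\chi_{n-1}(\lambda),
\]
where $R_{n-1}(\lambda)=\lambda-\sum_{k\notin N}\dfrac{\alpha_k}{\lambda-\mu_k}$ as in~\eqref{main.rat.func.general.case}, $N=\{j:\alpha_j=0\}$, and $\alpha_k>0$ for $k\notin N$. The crucial structural fact, already noted in the text, is that $R_{n-1}$ is an $R$-function: it maps the upper (lower) half-plane into itself and is real on $\mathbb{R}$. Accordingly, I would split $\sigma(J_n)$ into the \emph{inherited} eigenvalues $\mu_j$, $j\in N$ (these are roots of $\chi_n$ by~\eqref{residues.0}, hence common to $J_{n-1}$), and the zeros of $R_{n-1}(\lambda)-a_n$, i.e.\ the solutions of~\eqref{main.equation}. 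Since $\beta\in\mathbb{R}$, formula~\eqref{residues.1} permits $N\neq\varnothing$, which is precisely the source of both the multiplicities and the weak inequalities that are absent when $\beta$ is nonreal.

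For $a_n\in\mathbb{R}$ I would observe that $R_{n-1}-a_n$ is again an $R$-function, so its zeros are real, simple, and interlace its poles $\{\mu_k:k\notin N\}$; thus the solutions of~\eqref{main.equation} are real and simple. A multiple eigenvalue of $J_n$ can then arise only when such a solution coincides with an inherited pole $\mu_j$, $j\in N$; since both families consist of distinct points, every eigenvalue has multiplicity at most $2$ and any double one equals some $\mu_j$, hence lies in $\sigma(J_{n-1})$. The precise ordering~\eqref{residues.10111}--\eqref{residues.10112} I would obtain by feeding the sign data~\eqref{signs.chi_n.beta.pos}--\eqref{signs.chi_n.beta.neg} into this interlacing: using $(-1)^{n-k}\chi_n(\mu_k)=\alpha_k|\chi'_{n-1}(\mu_k)|$, the indices with strict positivity correspond to genuine poles ($\alpha_k>0$) and force strict inequalities, whereas those with only $(-1)^{n-k}\chi_n(\mu_k)\geqslant0$ allow $\alpha_k=0$ and hence the coincidences recorded by the weak inequalities.

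For $a_n\in\mathbb{C}\setminus\mathbb{R}$ the argument for the zeros of $R_{n-1}-a_n$ is identical to the one used for the preceding theorem: since $R_{n-1}$ is real on $\mathbb{R}$ and preserves each open half-plane, the equation $R_{n-1}(\lambda)=a_n$ with $\Im a_n>0$ (resp.\ $<0$) has all its solutions in the open upper (resp.\ lower) half-plane. The only real eigenvalues of $J_n$ are therefore the inherited ones $\mu_j$, $j\in N$, which yields simultaneously the closed-half-plane localization and the equivalence ``an eigenvalue of $J_n$ is real $\Longleftrightarrow$ it is an eigenvalue of $J_{n-1}$''. Finally, the inequalities~\eqref{signs.chi_n.beta.pos}--\eqref{signs.chi_n.beta.neg} persist for nonreal $a_n$ because $\chi_n(\mu_k)=-\alpha_k\chi'_{n-1}(\mu_k)$ is the residue of~\eqref{main.rat.func} at $\mu_k$ and does not involve $a_n$, so the estimates derived before the theorem transfer verbatim.

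I expect the main obstacle to be the last part of the $a_n\in\mathbb{R}$ case: turning the parity-dependent sign information into the exact pattern of strict versus weak inequalities in~\eqref{residues.10111}--\eqref{residues.10112}, while correctly handling the removable singularities of $R_{n-1}$ at the omitted poles $\mu_j$, $j\in N$. By comparison, the half-plane localization and the multiplicity bound are routine consequences of the $R$-function property and the simplicity of the two families of points.
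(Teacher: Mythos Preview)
Your proposal is correct and follows essentially the same approach as the paper: the theorem is stated immediately after the discussion that constitutes its proof, and that discussion is exactly the decomposition $\chi_n=(R_{n-1}-a_n)\chi_{n-1}$ together with the $R$-function properties of $R_{n-1}$, the identification of the inherited eigenvalues $\mu_j$ with $j\in N$, and the parity analysis of when $\alpha_k$ can vanish for real $\beta$. Your description of the obstacle is also apt: the paper, like you, derives~\eqref{residues.10111}--\eqref{residues.10112} from the sign pattern~\eqref{signs.chi_n.beta.pos}--\eqref{signs.chi_n.beta.neg} combined with the interlacing of the zeros and poles of $R_{n-1}-a_n$, without spelling out the case analysis in full.
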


\begin{remark}\label{remark.imaginary.part.charpoly}
From~\eqref{main.rat.func} it follows that if $a_n$ is nonreal, then
\begin{equation}\label{char.poly.full.formula}
\chi_n(\lambda)=\chi_{n-1}(\lambda)(\lambda-\Re a_n)-
\sum\limits_{k=1}^{n}\alpha_k\prod\limits_{\substack{j=1\\j\neq k}}^{n-1}(\lambda-\mu_j)-i\Im a_n\cdot\chi_{n-1}(\lambda),
\end{equation}
that is, $\chi_{n-1}(\lambda)$ is the imaginary part (up to the constant factor $\Im a_n$) of the polynomial~$\chi_n(\lambda)$,
so $\chi_n(\mu_k)\in\mathbb{R}$, $k=1,\ldots,n-1$.

%
%
\end{remark}

\vspace{3mm}

We now study a necessary condition which the spectra of the matrices
in the class~${\cal J}_n$ must satisfy.

\begin{theorem}\label{Theorem.spectrum.necessary.condition}
Let $J_n\in{\cal J}_n$, $\chi_n(\lambda)=\det(\lambda I_n-J_n)$, and
$\sigma(J_{n-1})=\{\mu_1,\ldots,\mu_{n-1}\}$. Then
\begin{itemize}
%
%
%
\item for nonreal $\beta$
\begin{equation}\label{spectrum.necessary.condition.nonreal.2}
(-1)^{n-k}\chi_n(\mu_k)>0\quad\text{and}\quad|\chi_n(\mu_k)+2\Re\beta|\geqslant2|\beta|,\quad k=1,\ldots,n-1,
\end{equation}
%
%
%
\item for real nonzero $\beta$
\begin{equation}\label{spectrum.necessary.condition.real.2}
(-1)^{n-k}\chi_n(\mu_k)\geqslant0\quad\text{and}\quad|\chi_n(\mu_k)|\geqslant4(-1)^{n-k-1}\beta,\quad k=1,\ldots,n-1,
\end{equation}
\end{itemize}

Here $\beta$ is defined in~\eqref{beta}:
\begin{equation*}
\beta=b_1b_2\cdots b_n\neq0.
\end{equation*}
\end{theorem}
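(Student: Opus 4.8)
My plan is to handle the two displayed inequalities in each case by reducing everything to two ingredients already established: the residue identity~\eqref{residues.0}--\eqref{residues} and the product formula~\eqref{formula.vectors}. Fix an index $k\in\{1,\ldots,n-1\}$ and abbreviate $d=\chi'_{n-1}(\mu_k)$, $A=|b_nu_{k1}|^2$, $B=|b_{n-1}u_{k,n-1}|^2$, and $w=b_nu_{k1}b_{n-1}\overline{u}_{k,n-1}$. By~\eqref{formula.vectors} one has $w=\beta/d$, hence $\Re w=\Re\beta/d$, $\Im w=\Im\beta/d$, and $AB=|w|^2=|\beta|^2/d^2$; by~\eqref{main.submatrix.2.spectrum.signs} the sign of the nonzero real number $d$ is $(-1)^{n-k-1}$; and by~\eqref{residues.0} we have $\chi_n(\mu_k)=-\alpha_k d$ with $\alpha_k\geqslant0$ real, so in particular $\chi_n(\mu_k)$ is real and the absolute values in the statement make sense.

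The first inequality in each case I would read off directly from~\eqref{signs.chi_n}, refined. Since $\sgn d=(-1)^{n-k-1}$,
\begin{equation*}
(-1)^{n-k}\chi_n(\mu_k)=-(-1)^{n-k}\alpha_k d=\alpha_k|d|.
\end{equation*}
For nonreal $\beta$ the numerator in~\eqref{residues.1} carries the strictly positive term $(\Im\beta)^2$, forcing $\alpha_k>0$, so this is a strict inequality; for real nonzero $\beta$ only $\alpha_k\geqslant0$ is available, which gives the nonstrict form.

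For the magnitude inequalities the key step is one identity, obtained by inserting~\eqref{residues} into~\eqref{residues.0} and using $2\Re w\cdot d=2\Re\beta$:
\begin{equation*}
\chi_n(\mu_k)=-\alpha_k d=-(A+B)d-2\Re w\cdot d=-(A+B)d-2\Re\beta,
\end{equation*}
so that $\chi_n(\mu_k)+2\Re\beta=-(A+B)d$. Taking absolute values and applying $A+B\geqslant2\sqrt{AB}=2|\beta|/|d|$ gives $|\chi_n(\mu_k)+2\Re\beta|=(A+B)|d|\geqslant2|\beta|$, which is precisely the nonreal bound. In the real case $\Re\beta=\beta$, and writing $\epsilon=\sgn d=(-1)^{n-k-1}$ the same identity yields $|\chi_n(\mu_k)|=\alpha_k|d|=(A+B)|d|+2\epsilon\beta\geqslant2|\beta|+2\epsilon\beta$; since $|\beta|\geqslant\epsilon\beta$ always holds, the right-hand side is at least $4\epsilon\beta=4(-1)^{n-k-1}\beta$, as claimed.

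The algebra above is routine substitution of~\eqref{formula.vectors}, \eqref{residues.0} and~\eqref{residues} together with the arithmetic--geometric mean inequality. The one point I expect to need genuine care — and hence the main (if minor) obstacle — is the sign bookkeeping in the real-$\beta$ case: one must combine the exact value $|\chi_n(\mu_k)|=(A+B)|d|+2\epsilon\beta$ with the AM--GM bound so as to recover the signed right-hand side $4(-1)^{n-k-1}\beta$ rather than the weaker symmetric bound $2|\beta|$, and to keep track of when the term $2\epsilon\beta$ helps or hurts. Everything else reduces to the single identity $\chi_n(\mu_k)+2\Re\beta=-(A+B)d$.
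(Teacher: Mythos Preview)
Your proposal is correct. The key identity $\chi_n(\mu_k)+2\Re\beta=-(A+B)d$ and the product relation $AB=|\beta|^2/d^2$ are exactly what the paper uses, but the paper packages them as a quadratic equation in $X_k=A$ (with $B$ as the other root, cf.\ Remark~\ref{Remark.solutions}) and reads off the inequalities as ``the quadratic has a positive real root'': nonnegativity of the discriminant gives $|\chi_n(\mu_k)+2\Re\beta|\geqslant2|\beta|$, and Vieta's sum gives $(-1)^{n-k}(\chi_n(\mu_k)+2\Re\beta)\geqslant0$. Your argument is the same algebra one level down: you work directly with $A+B$ and $AB$ and invoke AM--GM, which is precisely the discriminant condition for a quadratic with those roots. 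What this buys you is a cleaner endgame --- you avoid the paragraph the paper spends showing that~\eqref{Theorem.spectrum.necessary.condition2.proof.1} is a consequence of~\eqref{Theorem.spectrum.necessary.condition1.proof.1} and~\eqref{Theorem.spectrum.necessary.condition3.proof.1}, and the real-$\beta$ sign analysis collapses to the single observation $|\beta|\geqslant\epsilon\beta$. What the paper's formulation buys is the explicit root formula~\eqref{solutions.new}, which is needed later for the inverse problem; your approach would need to recover that separately.
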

\begin{proof}
In fact, taking into account that $b_nu_{k1}\neq0$ by~\eqref{formula.vectors} and
$\chi'_{n-1}(\mu_k)\neq0$ by~\eqref{main.submatrix.2.spectrum.signs}, from~\eqref{residues.0}
and~\eqref{residues.1} we obtain that $X_k:=|b_nu_{k1}|^2$ satisfies
the following equation
\begin{equation}\label{Theorem.spectrum.necessary.condition.proof.equation}
[\chi'_{n-1}(\mu_k)X_k+\Re\beta]^2+\chi_n(\mu_k)\chi'_{n-1}(\mu_k)X_k+(\Im\beta)^2=0,
\end{equation}
or
\begin{equation}\label{Theorem.spectrum.necessary.condition.proof.equation1}
[\chi'_{n-1}(\mu_k)]^2X^2_k+(\chi_n(\mu_k)+2\Re\beta)\chi'_{n-1}(\mu_k)X_k+(\Im\beta)^2=0.
\end{equation}

The solutions of this equations have the form
\begin{equation}\label{solutions.new}
X_k^{(1,2)}=\dfrac{(-1)^{n-k}(\chi_{n}(\mu_k)+2\Re\beta)
\pm\sqrt{[\chi_{n}(\mu_k)+2\Re\beta]^2-4|\beta|^2}}{2|\chi'_{n-1}(\mu_k)|}.
\end{equation}

Since $b_nu_{k1}\neq0$, $X_k$ is positive, so we have for any (real or nonreal) $\beta$ and $a_n$ that (see~\eqref{signs.chi_n})
\begin{equation}\label{Theorem.spectrum.necessary.condition1.proof.1}
[\chi_{n}(\mu_k)+2\Re\beta]^2-4|\beta|^2\geqslant0,
\end{equation}
\begin{equation}\label{Theorem.spectrum.necessary.condition2.proof.1}
(-1)^{n-k}(\chi_{n}(\mu_k)+2\Re\beta)\geqslant0,
\end{equation}
\begin{equation}\label{Theorem.spectrum.necessary.condition3.proof.1}
(-1)^{n-k}\chi_{n}(\mu_k)\geqslant0,
\end{equation}
and $[\chi_{n}(\mu_k)+2\Re\beta]^2-4|\beta|^2\geqslant0$ and
$(-1)^{n-k}(\chi_{n}(\mu_k)+2\Re\beta)\geqslant0$ do not equal
zero simultaneously.

We now show that the inequality in~\eqref{Theorem.spectrum.necessary.condition2.proof.1}
follows from~\eqref{Theorem.spectrum.necessary.condition1.proof.1}
and~\eqref{Theorem.spectrum.necessary.condition3.proof.1}. Indeed, the
inequality~\eqref{Theorem.spectrum.necessary.condition1.proof.1} implies
\begin{equation*}
(\chi_{n}(\mu_k)+4\Re\beta)\chi_{n}(\mu_k)\geqslant4(\Im\beta)^2\geqslant0,
\end{equation*}
so
\begin{equation*}
(-1)^{n-k}(\chi_{n}(\mu_k)+4\Re\beta)\geqslant0.
\end{equation*}
Therefore, if $(-1)^{n-k}\Re\beta<0$, then
\begin{equation*}
(-1)^{n-k}(\chi_{n}(\mu_k)+2\Re\beta)>(-1)^{n-k}(\chi_{n}(\mu_k)+4\Re\beta)\geqslant0,
\end{equation*}
and if $(-1)^{n-k}\Re\beta\geqslant0$, then
\begin{equation*}
(-1)^{n-k}(\chi_{n}(\mu_k)+2\Re\beta)\geqslant(-1)^{n-k}\chi_{n}(\mu_k)\geqslant0.
\end{equation*}

Thus, we obtain that for any complex (real or nonreal) $\beta$ and $a_n$,
the eigenvalues of the matrices $J_n$ and $J_{n-1}$ satisfy the
inequalities~\eqref{Theorem.spectrum.necessary.condition1.proof.1}
and~\eqref{Theorem.spectrum.necessary.condition3.proof.1} that is
equivalent to~\eqref{spectrum.necessary.condition.nonreal.2}.

Let now $\beta\in\mathbb{R}\backslash\{0\}$, so $\Re\beta=\beta$ and $\Im\beta=0$. In this case, the
inequality~\eqref{Theorem.spectrum.necessary.condition1.proof.1}
has the form
\begin{equation*}
\chi_{n}(\mu_k)[\chi_{n}(\mu_k)+4\beta]\geqslant0,
\end{equation*}
so
\begin{equation}\label{Theorem.spectrum.necessary.condition.proof.3}
\chi_{n}(\mu_k)=0,
\end{equation}
or
\begin{equation}\label{Theorem.spectrum.necessary.condition.proof.4}
|\chi_{n}(\mu_k)|\geqslant4\beta(-1)^{n-k-1}.
\end{equation}
Moreover, for real $\beta$ the eigenvalues of $J_n$ and $J_{n-1}$
satisfy the
inequalities~\eqref{signs.chi_n.beta.pos}--\eqref{signs.chi_n.beta.neg},
which include~\eqref{Theorem.spectrum.necessary.condition.proof.3}.

It is easy to see
that~\eqref{signs.chi_n.beta.pos}--\eqref{signs.chi_n.beta.neg}
and~\eqref{Theorem.spectrum.necessary.condition.proof.4}
imply~\eqref{spectrum.necessary.condition.real.2}, as required.
\end{proof}
\begin{remark}
By the Hermite--Biehler theorem (see e.g.~\cite{Obreschkoff}), the condition $(-1)^{n-k}\chi_{n}(\mu_k)>0$
in~\eqref{spectrum.necessary.condition.nonreal.2}
can be changed to  $\chi_{n}(\mu_k)\in\mathbb{R}$,
since for $\Im a_n>0$ ($\Im a_n<0$) the polynomial $\chi_n(\lambda)$
has all roots in the open upper (lower) half-plane of the complex plane,
and since $\chi_{n-1}(\lambda)$ is its imaginary part (see Remark~\ref{remark.imaginary.part.charpoly}).
\end{remark}
\begin{remark}
The inequalities~\eqref{spectrum.necessary.condition.real.2} were established
in~\cite{XJO}  under the assumption of reality of all the entries of
the matrix $J_n$ and $b_n>0$.
\end{remark}
\begin{remark}
If $\beta$ is a pure imaginary number, that is, $\beta=i\gamma$,
$\gamma\in\mathbb{R}\backslash\{0\}$, then the
conditions~\eqref{spectrum.necessary.condition.nonreal.2} have the
form
\begin{equation*}\label{spectrum.necessary.condition.imaginary}
|\chi_n(\mu_k)|\geqslant2|\gamma|,\quad k=1,\ldots,n-1.
\end{equation*}
\end{remark}

\begin{remark}\label{Remark.solutions}
The formul\ae~\eqref{residues.0} and~\eqref{formula.vectors} also imply that
\begin{equation}\label{Remark.solutions.eq}
\begin{array}{c}
\alpha_k=\dfrac{[|\overline{b}_{n-1}u_{k,n-1}|^2+\Re(b_nu_{k1}b_{n-1}\overline{u}_{k,n-1})]^2+
[\Im(b_nu_{k1}b_{n-1}\overline{u}_{k,n-1})]^2}{|\overline{b}_{n-1}u_{k,n-1}|^2}=\\
\\
=\dfrac{[\chi'_{n-1}(\mu_k)|\overline{b}_{n-1}u_{k,n-1}|^2+\Re\beta]^2+
(\Im\beta)^2}{|\overline{b}_{n-1}u_{k,n-1}|^2\left[\chi'_{n-1}(\mu_k)\right]^2}.
\end{array}
\end{equation}
Hence from~\eqref{residues.0} and~\eqref{Remark.solutions.eq} we get
that $X_k:=|b_{n-1}u_{k,n-1}|^2$ satisfies the
equation~\eqref{Theorem.spectrum.necessary.condition.proof.equation}.
Therefore, if $X^{(1,2)}_k$ are solutions
of~\eqref{Theorem.spectrum.necessary.condition.proof.equation}, then
$X^{(1)}_k=|b_1u_{k1}|^2$,
$X^{(2)}_k=|\overline{b}_{n-1}u_{k,n-1}|^2$, or
$X^{(1)}_k=|\overline{b}_{n-1}u_{k,n-1}|^2$,
$X^{(2)}_k=|b_1u_{k1}|^2$.
\end{remark}

\vspace{4mm}

Finally, recall that the system of vectors
$\mathbf{u}_1,\ldots,\textbf{u}_{n-1}$  is orthonormal, so the
matrix $U=[|u_{kj}|]_{k,j=1}^{n-1}$ is unitary. Therefore,
\begin{equation*}
\sum\limits_{k=1}^{n-1}|u_{k1}|^2=1.
\end{equation*}
Since  $X_k=|b_nu_{k1}|^2$, we have
\begin{equation}\label{Theorem.spectrum.necessary.condition.proof.8}
\sum\limits_{k=1}^{n-1}X_k^{(1,2)}=\sum\limits_{k=1}^{n-1}|b_nu_{k1}|^2=|b_n|^2\sum\limits_{k=1}^{n-1}|u_{k1}|^2=|b_n|^2.
\end{equation}
Thus, \eqref{solutions.new} and~\eqref{Theorem.spectrum.necessary.condition.proof.8} imply
\begin{equation}\label{abs.b_n.formula}
|b_n|=\left(\sum\limits_{k=1}^{n-1}\dfrac{(-1)^{n-k}(\chi_{n}(\mu_k)+2\Re\beta)
\pm\sqrt{[\chi_{n}(\mu_k)+2\Re\beta]^2-4|\beta|^2}}{2|\chi'_{n-1}(\mu_k)|}\right)^{\tfrac12},
\end{equation}
and
\begin{equation}\label{u_k1.formula}
|u_{k1}|^2=\dfrac{(-1)^{n-k}(\chi_{n}(\mu_k)+2\Re\beta)
\pm\sqrt{[\chi_{n}(\mu_k)+2\Re\beta]^2-4|\beta|^2}}{2|b_n|^2|\chi'_{n-1}(\mu_k)|}>0,
\end{equation}
for $k=1,\ldots,n-1$.

By the same reasoning as above and by Remark~\ref{Remark.solutions} we obtain that
\begin{equation}\label{abs.b_n-1.formula}
|b_{n-1}|=\left(\sum\limits_{k=1}^{n-1}\dfrac{(-1)^{n-k}(\chi_{n}(\mu_k)+2\Re\beta)
\mp\sqrt{[\chi_{n}(\mu_k)+2\Re\beta]^2-4|\beta|^2}}{2|\chi'_{n-1}(\mu_k)|}\right)^{\tfrac12},
\end{equation}
and
\begin{equation}\label{u_k,n-1.formula}
|u_{k,n-1}|^2=\dfrac{(-1)^{n-k}(\chi_{n}(\mu_k)+2\Re\beta)
\mp\sqrt{[\chi_{n}(\mu_k)+2\Re\beta]^2-4|\beta|^2}}{2|b_{n-1}|^2|\chi'_{n-1}(\mu_k)|}>0,
\end{equation}
for $k=1,\ldots,n-1$.

If $\beta\in\mathbb{R}\backslash\{0\}$, then the
formul\ae~\eqref{abs.b_n.formula}--
\eqref{u_k1.formula}
can be represented in the following form:
\begin{equation}\label{abs.b_n.formula.beta.real}
|b_n|=\left(\sum\limits_{k=1}^{n-1}\dfrac{(-1)^{n-k}(\chi_{n}(\mu_k)+2\beta)
\pm\sqrt{\chi_{n}(\mu_k)(\chi_{n}(\mu_k)+4\beta)}}{2|\chi'_{n-1}(\mu_k)|}\right)^{\tfrac12},
\end{equation}
and
\begin{equation}\label{u_k1.formula.beta.real}
|u_{k1}|^2=\dfrac{(-1)^{n-k}(\chi_{n}(\mu_k)+2\beta)
\pm\sqrt{\chi_{n}(\mu_k)(\chi_{n}(\mu_k)+4\beta)}}{2|b_n|^2|\chi'_{n-1}(\mu_k)|}>0,
\end{equation}
for $k=1,\ldots,n-1$.
\begin{equation}\label{abs.b_n-1.formula.beta.real}
|b_{n-1}|=\left(\sum\limits_{k=1}^{n-1}\dfrac{(-1)^{n-k}(\chi_{n}(\mu_k)+2\beta)
\mp\sqrt{\chi_{n}(\mu_k)(\chi_{n}(\mu_k)+4\beta)}}{2|\chi'_{n-1}(\mu_k)|}\right)^{\tfrac12},
\end{equation}
and
\begin{equation}\label{u_k,n-1.formula.beta.real}
|u_{k,n-1}|^2=\dfrac{(-1)^{n-k}(\chi_{n}(\mu_k)+2\beta)
\mp\sqrt{\chi_{n}(\mu_k)(\chi_{n}(\mu_k)+4\beta)}}{2|b_{n-1}|^2|\chi'_{n-1}(\mu_k)|}>0,
\end{equation}
for $k=1,\ldots,n-1$.

\setcounter{equation}{0}

\section{Inverse problems for matrices in  ${\cal J}_n$ }\label{section:inverse.problems.even.odd}%

In this section, we show that the necessary conditions~\eqref{spectrum.necessary.condition.nonreal.2}--\eqref{spectrum.necessary.condition.real.2} on the spectra of the matrices in the class~${\cal J}_n$ are also sufficient. This means that
given $2n$ numbers $\lambda_1$, \ldots, $\lambda_n$, $\mu_1$, \ldots, $\mu_{n-1}$, $\beta$
satisfying~\eqref{spectrum.necessary.condition.nonreal.2} or~\eqref{spectrum.necessary.condition.real.2}
we can reconstruct a matrix $J_n\in{\cal J}_n$ whose eigenvalues are $\lambda_1$, \ldots, $\lambda_n$,
the eigenvalues of its leading principal submatrix $J_{n-1}$ are $\mu_1$, \ldots, $\mu_{n-1}$, and
$\beta=b_1\cdots b_n$.

But any matrix in the class ${\cal J}_n$ has $3n+1$ parameters,
namely, $c_k$, $k=1,\ldots,n-1$, $\Re a_n$, $\Im a_n$, $\Re b_j$,
$\Im b_j$, $j=1,\ldots,n$. At the same time, the spectral data
$\lambda_1$, \ldots, $\lambda_n$, $\mu_1$, \ldots, $\mu_{n-1}$,
$\beta$ consist of only $2n$ or $2n+1$ parameters. This is obvious
if all the $\lambda_j$ are real. But if the $\lambda_j$ are nonreal,
they depend on the numbers $\mu_k$, since the numbers
$\prod\limits_{j=1}^{n}(\mu_k-\lambda_j)$ are real
by~\eqref{spectrum.necessary.condition.nonreal.2}. Thus, if we want
to reconstruct finitely many matrices from the spectral data mentioned above, we
should restrict ourselves to a subclass of the class~${\cal J}_n$.

Thus, in this section, we consider the subclass~$\widehat{{\cal J}}_n$ of the class~${\cal J}_n$
consisting of matrices of the form
\begin{equation}\label{main.matrix.subclass}
\widehat{J}_n=\begin{pmatrix}
\widehat{c}_1 & \widehat{b}_1 &  0 &\dots&   0   & \overline{\widehat{b}}_n \\
\widehat{b}_1 & \widehat{c}_2 &\widehat{b}_2 &\dots&   0   & 0 \\
0  &\widehat{b}_2 & \widehat{c}_3 &\dots&   0   & 0 \\
\vdots&\vdots&\vdots&\ddots&\vdots&\vdots\\
0  &  0  &  0  &\dots&\widehat{c}_{n-1}& \widehat{b}_{n-1}\\
\widehat{b}_n  &  0  &  0  &\dots&\widehat{b}_{n-1}&\widehat{a}_n\\
\end{pmatrix},
\end{equation}
with \emph{real} $\hat c_k,\hat b_k$, $k=1,\ldots,n-1$ and complex
(real or nonreal) $\hat a_n$ and $\hat b_n$. This subclass has  the
following important property.

\begin{lemma}
For any matrix in the class ${\cal J}_n$, there exists
a matrix in the class $\widehat{{\cal J}}_n$ with the same spectral data, that is, with
the same spectra of the matrix itself and of its leading $(n-1)\times(n-1)$ submatrix,
and the same number~$\beta$ $($see~\eqref{beta}$)$.
\end{lemma}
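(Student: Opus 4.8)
The plan is to realize $\widehat{J}_n$ as a \emph{diagonal unitary} conjugate of the given $J_n\in\mathcal{J}_n$. The point of this choice is that such a conjugation leaves every diagonal entry untouched, rotates each off-diagonal entry by a phase, and — being a similarity transformation that respects the leading-block structure — preserves all the relevant spectral data simultaneously. The only substantive thing to check will be that the number $\beta$ survives the phase change.

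First I would choose $D=\diag(d_1,\ldots,d_n)$ with $|d_j|=1$, designed to make the tridiagonal off-diagonal entries real and positive. Setting $d_1=1$ and defining recursively $d_{j+1}=|b_j|\,d_j/b_j$ for $j=1,\ldots,n-1$, one checks inductively that $|d_{j+1}|=|d_j|=1$, so $D$ is unitary, and that the $(j,j+1)$ entry of $D^{*}J_nD$, namely $\overline{d_j}\,b_j\,d_{j+1}$, equals $|b_j|$. Since conjugation by a diagonal matrix fixes the diagonal, $\widehat{J}_n:=D^{*}J_nD$ has diagonal $c_1,\ldots,c_{n-1},a_n$ (so $\widehat{c}_j=c_j$ are real and $\widehat{a}_n=a_n$), real positive band entries $\widehat{b}_j=|b_j|$, and corner entries $\widehat{b}_n=\overline{d_n}\,b_n\,d_1=\overline{d_n}\,b_n$ in position $(n,1)$ together with its conjugate in position $(1,n)$. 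Because $|\widehat{b}_n|=|b_n|\neq0$, the matrix $\widehat{J}_n$ lies in $\widehat{\mathcal{J}}_n$.

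Preservation of the two spectra is then immediate. The full matrix $\widehat{J}_n=D^{*}J_nD=D^{-1}J_nD$ is similar to $J_n$, hence carries the same eigenvalues $\lambda_1,\ldots,\lambda_n$; and since $D$ is diagonal, the leading $(n-1)\times(n-1)$ block of $\widehat{J}_n$ is exactly $D_{n-1}^{*}J_{n-1}D_{n-1}$ with $D_{n-1}=\diag(d_1,\ldots,d_{n-1})$, so it inherits the spectrum $\{\mu_1,\ldots,\mu_{n-1}\}$ of $J_{n-1}$.

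The one place that needs a brief computation — the main, if mild, obstacle — is confirming that $\beta$ is unchanged. Here I would telescope the recursion: multiplying $d_{j+1}/d_j=|b_j|/b_j$ over $j=1,\ldots,n-1$ gives $d_n=\prod_{j=1}^{n-1}|b_j|/b_j$, whence $\overline{d_n}=\prod_{j=1}^{n-1}|b_j|/\overline{b_j}$ and $\widehat{b}_n=b_n\prod_{j=1}^{n-1}|b_j|/\overline{b_j}$. Consequently $\widehat{\beta}=\widehat{b}_1\cdots\widehat{b}_{n-1}\widehat{b}_n=\Bigl(\prod_{j=1}^{n-1}|b_j|\Bigr)b_n\prod_{j=1}^{n-1}\frac{|b_j|}{\overline{b_j}}=b_n\prod_{j=1}^{n-1}\frac{|b_j|^2}{\overline{b_j}}=b_n\prod_{j=1}^{n-1}b_j=\beta$, as required. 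This closes the argument; note that it uses nothing from the spectral analysis of Section~\ref{section:spectrum.odd.even}, only the elementary fact that $D$ can be chosen to realign the phases of the tridiagonal band while leaving the product of all the $b_j$ invariant.
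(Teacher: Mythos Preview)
Your proof is correct and, in fact, constructs exactly the same matrix $\widehat{J}_n$ as the paper does (one checks that $\overline{d_n}\,b_n=\beta/(|b_1|\cdots|b_{n-1}|)$), but the \emph{argument} you use to verify that the spectra agree is genuinely different and more elementary.

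The paper does not exhibit $\widehat{J}_n$ as a conjugate of $J_n$; instead it appeals to the spectral analysis of Section~\ref{section:spectrum.odd.even}, arguing that $\chi_n$ is determined by $a_n$, the $\mu_k$, and the residues $\alpha_k$, that the $\alpha_k$ in turn depend only on $\mu_k$, $\beta$, and $|b_nu_{k1}|^2$, and finally that $|b_nu_{k1}|^2$ is invariant because the $(1,1)$ entry of the resolvent of $J_{n-1}$ coincides with that of $\widehat{J}_{n-1}$. Your diagonal-unitary similarity bypasses all of this machinery: because $D$ is diagonal, the similarity $D^{-1}(\cdot)D$ restricts to the leading $(n-1)\times(n-1)$ block, so both spectra are preserved for free, and the only nontrivial check is the telescoping verification that $\widehat\beta=\beta$. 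The paper's route has the minor advantage of making explicit why the spectral data $(a_n,\{\mu_k\},\{\alpha_k\},\beta)$ are the ``right'' invariants for the subsequent inverse problem, but as a proof of the lemma your approach is cleaner and self-contained.
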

\begin{proof}
Indeed, by~\eqref{char.poly.full.formula} the spectrum of a matrix $J_n$ in the class ${\cal J}_n$ depends
on $a_n$, $\mu_k$, and $\alpha_k$, $k=1,\ldots,n-1$, where $\mu_k$ are the eigenvalues of the leading principal
submatrix $J_{n-1}$. At the same time, by~\eqref{residues.1}
the numbers~$\alpha_k$ depend on $\mu_k$, $\beta$, and~$|b_1u_{k1}|^2$, where $\beta$ is defined in~\eqref{beta}.

Thus, given a matrix $J_n$ in the class ${\cal J}_n$, we must find a
matrix $\widehat{J}_n$ in the class $\widehat{{\cal J}}_n$ with the same $a_n$, $\beta$,
the spectrum of the submatrix $\widehat{J}_{n-1}$, and the same numbers $|b_1u_{k1}|^2$.

So we consider a matrix
\begin{equation*}
J_n=\begin{pmatrix}
c_1 & b_1 &  0 &\dots&   0   & \overline{b}_n \\
\overline{b}_1 & c_2 &b_2 &\dots&   0   & 0 \\
0  &\overline{b}_2 & c_3 &\dots&   0   & 0 \\
\vdots&\vdots&\vdots&\ddots&\vdots&\vdots\\
0  &  0  &  0  &\dots&c_{n-1}& b_{n-1}\\
b_n  &  0  &  0  &\dots&\overline{b}_{n-1}&a_n\\
\end{pmatrix},
\end{equation*}
in the class ${\cal J}_n$ and construct the matrix
\begin{equation*}
\widehat{J}_n=\begin{pmatrix}
\widehat{c}_1 & \widehat{b}_1 &  0 &\dots&   0   & \overline{\widehat{b}}_n \\
\widehat{b}_1 & \widehat{c}_2 &\widehat{b}_2 &\dots&   0   & 0 \\
0  &\widehat{b}_2 & \widehat{c}_3 &\dots&   0   & 0 \\
\vdots&\vdots&\vdots&\ddots&\vdots&\vdots\\
0  &  0  &  0  &\dots&\widehat{c}_{n-1}& \widehat{b}_{n-1}\\
\widehat{b}_n  &  0  &  0  &\dots&\widehat{b}_{n-1}&\widehat{a}_n\\
\end{pmatrix},
\end{equation*}
such that
\begin{equation}\label{new.matrix.entries.1}
\begin{array}{c}
\widehat{c}_j=c_j,\qquad j=1,\ldots,n-1,\\
\\
\widehat{b}_k=|b_k|,\qquad k=1,\ldots,n-1,\\
\\
\widehat{a}_n=a_n,
\end{array}
\end{equation}
and
\begin{equation}\label{new.matrix.entries.2}
\widehat{b}_n=\dfrac{b_1b_2\cdots b_n}{|b_1b_2\cdots b_{n-1}|}=\dfrac{\beta}{\widehat{b}_1\widehat{b}_2\cdots \widehat{b}_{n-1}}.
\end{equation}
Obviously, $\widehat{J}_n\in\widehat{{\cal J}}_n$.

The formul\ae~\eqref{new.matrix.entries.1} show that the spectra of the submatrices $J_{n-1}$
and $\widehat{J}_{n-1}$ coincide (see~\cite{KreinGantmacher}):
\begin{equation*}
\sigma(\widehat{J}_n)=\sigma(J_n)=\{\mu_1,\ldots,\mu_{n-1}\}.
\end{equation*}
Moreover, from~\eqref{new.matrix.entries.2} we have
\begin{equation*}
\beta=b_1b_2\cdots b_n=\widehat{b}_1\widehat{b}_2\cdots \widehat{b}_n=\widehat{\beta}.
\end{equation*}
Therefore, to establish that $\sigma(\widehat{J}_n)=\sigma(J_n)$ it suffices to prove that
\begin{equation}\label{residues.equality}
\left(\widehat{b}_1\widehat{u}_{k1}\right)^2=|b_1u_{k1}|^2,\qquad k=1,\ldots,n-1.
\end{equation}

The formula~\eqref{submatrix.resolvent} and the equalities~\eqref{new.matrix.entries.1} imply that
\begin{equation*}
\begin{array}{c}
\sum\limits_{k=1}^{n-1}\dfrac{|b_1u_{k1}|^2}{\lambda-\mu_k}=|b_1|^2\mathbf{e}_1^{T}(\lambda I_{n-1}-J_{n-1})^{-1}\mathbf{e}_1=\\
\\
=\dfrac{|b_1|^2\det(\lambda I_{n-2}-J_{n-2})}{\det(\lambda I_{n-1}-J_{n-1})}=
\dfrac{\widehat{b}_1^2\det(\lambda I_{n-2}-\widehat{J}_{n-2})}{\det(\lambda I_{n-1}-\widehat{J}_{n-1})}=
\\
=\widehat{b}_1^2\mathbf{e}_1^{T}(\lambda I_{n-1}-\widehat{J}_{n-1})^{-1}\mathbf{e}_1=
\sum\limits_{k=1}^{n-1}\dfrac{\left(\widehat{b}_1\widehat{u}_{k1}\right)^2}{\lambda-\mu_k},
\end{array}
\end{equation*}
so~\eqref{residues.equality} holds, as required.
\end{proof}

Now we establish that the conditions~\eqref{spectrum.necessary.condition.nonreal.2}--\eqref{spectrum.necessary.condition.real.2}
on the spectral data are also sufficient to reconstruct finitely
many matrices in the class~$\widehat{{\cal J}}_n$ with this spectral data.
We consider 4 cases.
\begin{theorem}\label{Theorem.inverse.beta.a_n.nonreal}
Let $\beta$ be a nonreal number, and let $\{\mu_k\}_{k=1}^{n-1}$ be a set of
real distinct numbers.
%
%
Suppose that $\{\lambda_j\}_{j=1}^{n}$  is a set of complex
numbers such that $\Im\lambda_j>0$, $j=1,\ldots,n-1$.

Then there exists a matrix $\widehat{J}_n$ in the class $\widehat{{\cal J}}_n$
with $\Im \widehat{a}_n>0$ and $\beta=\widehat{b}_1\cdots \widehat{b}_n$ such that
$\sigma(\widehat{J}_n)=\{\lambda_1,\ldots,\lambda_n\}$
and $\sigma(\widehat{J}_{n-1})=\{\mu_1,\ldots,\mu_{n-1}\}$ if and only if
\begin{equation}\label{solvability.condition.beta.a_n.nonreal}
(-1)^{n-k}\chi_n(\mu_k)>0\quad\text{and}\quad|\chi_n(\mu_k)+2\Re\beta|\geqslant2|\beta|,\quad k=1,\ldots,n-1,
\end{equation}
where $\chi_n(\lambda)=\prod\limits_{j=1}^{n}(\lambda-\lambda_j)$.


Moreover, if there are $m$ $(0\leqslant m\leqslant n-1)$ equalities
in~\eqref{solvability.condition.beta.a_n.nonreal}, then there exist
exactly $2^{n-m-1}$ matrices in the class $\widehat{{\cal J}}_n$ with this spectral data.
\end{theorem}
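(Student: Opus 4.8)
The plan is to treat the two implications separately, putting essentially all of the work into the reconstruction. Necessity is immediate: since $\widehat{\mathcal J}_n\subset\mathcal J_n$ and $\beta$ is nonreal, Theorem~\ref{Theorem.spectrum.necessary.condition} applies to any $\widehat J_n\in\widehat{\mathcal J}_n$ carrying the prescribed spectral data and yields exactly~\eqref{solvability.condition.beta.a_n.nonreal}. For sufficiency I would reconstruct the entries of $\widehat J_n$ block by block from $\{\lambda_j\}$, $\{\mu_k\}$, $\beta$. Put $\chi_{n-1}(\lambda)=\prod_{k=1}^{n-1}(\lambda-\mu_k)$ and $\chi_n(\lambda)=\prod_{j=1}^n(\lambda-\lambda_j)$, and read off $\widehat a_n$ and the residues from the model identity~\eqref{main.rat.func}: matching the behaviour at $\infty$ forces $\widehat a_n=\sum_j\lambda_j-\sum_k\mu_k$, and the residue of $\chi_n/\chi_{n-1}$ at $\mu_k$ gives $\alpha_k=-\chi_n(\mu_k)/\chi'_{n-1}(\mu_k)\geqslant0$ by~\eqref{residues.0}.

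The first half of~\eqref{solvability.condition.beta.a_n.nonreal} says $\chi_n(\mu_k)\in\mathbb R$ at the $n-1$ distinct real points $\mu_k$; since $\Im\chi_n(\lambda)$ is a real polynomial of degree at most $n-1$, this forces $\Im\chi_n(\lambda)=-\Im\widehat a_n\cdot\chi_{n-1}(\lambda)$ identically on $\mathbb R$ (cf.\ Remark~\ref{remark.imaginary.part.charpoly}). Writing $\chi_n=P+iQ$ with $P=\Re\chi_n$ real monic of degree $n$ and $Q=-\Im\widehat a_n\,\chi_{n-1}$, the alternating signs $\sgn\chi_n(\mu_k)=\sgn P(\mu_k)=(-1)^{n-k}$ say precisely that the real zeros of $P$ and of $\chi_{n-1}$ strictly interlace, so $(P,Q)$ is a Hermite--Biehler pair with no common zero. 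Hence $\chi_n$ has no real zero and all its zeros lie strictly on one side of $\mathbb R$; as $\lambda_1,\dots,\lambda_{n-1}$ are given in the open upper half-plane, that side is the upper one, whence $\lambda_n$ lies there too and $\Im\widehat a_n=\sum_j\Im\lambda_j>0$.

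Next I would recover the boundary data and the norming constants. For each $k$ the quantity $X_k=|b_nu_{k1}|^2$ must satisfy the quadratic~\eqref{Theorem.spectrum.necessary.condition.proof.equation1}, whose roots are given by~\eqref{solutions.new}; the second inequality in~\eqref{solvability.condition.beta.a_n.nonreal} is exactly nonnegativity of its discriminant, and the sign relations established in the proof of Theorem~\ref{Theorem.spectrum.necessary.condition} make both roots strictly positive (their product $|\beta|^2/[\chi'_{n-1}(\mu_k)]^2$ is positive since $\beta\neq0$). By Remark~\ref{Remark.solutions} the two roots are $|b_nu_{k1}|^2$ and $|b_{n-1}u_{k,n-1}|^2$; I would assign one root to each, set $|b_n|^2$ and $|b_{n-1}|^2$ to the corresponding sums as in~\eqref{abs.b_n.formula} and~\eqref{abs.b_n-1.formula}, and define the positive weights $|u_{k1}|^2$, $|u_{k,n-1}|^2$ by~\eqref{u_k1.formula} and~\eqref{u_k,n-1.formula}, so that $\sum_k|u_{k1}|^2=\sum_k|u_{k,n-1}|^2=1$ automatically. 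As the $\mu_k$ are distinct and every $|u_{k1}|^2$ is positive, the classical inverse problem for Jacobi matrices produces a unique real tridiagonal $\widehat J_{n-1}$ with positive off-diagonal entries, eigenvalues $\mu_k$ and first eigenvector components $|u_{k1}|$; this fixes $\widehat c_1,\dots,\widehat c_{n-1}$ and $\widehat b_1,\dots,\widehat b_{n-2}$. Finally I would set $\widehat b_{n-1}=|b_{n-1}|$ and $\widehat b_n=\beta/(\widehat b_1\cdots\widehat b_{n-1})$.

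The step I expect to be most delicate is verifying that this assignment is consistent and actually reproduces the data. The last eigenvector components of the reconstructed $\widehat J_{n-1}$ are not free: the resolvent identity~\eqref{submatrix.resolvent} gives $u_{k1}u_{k,n-1}\chi'_{n-1}(\mu_k)=\widehat b_1\cdots\widehat b_{n-2}$, a constant $D$, and the two unitarity sums together with the product relation $X_k^{(1)}X_k^{(2)}=|\beta|^2/[\chi'_{n-1}(\mu_k)]^2$ from~\eqref{formula.vectors.abs} force $D=|\beta|/(|b_n||b_{n-1}|)$; consequently the reconstructed last components coincide with the prescribed $|u_{k,n-1}|$ and $|\widehat b_n|=|\beta|/(\widehat b_1\cdots\widehat b_{n-1})=|b_n|$ automatically. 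With $\widehat a_n$, the $\mu_k$ and the $\alpha_k$ fixed, running the Schur identity~\eqref{main.rat.func.0}--\eqref{main.rat.func} in the forward direction shows that the characteristic polynomial of the assembled $\widehat J_n$ is $\chi_n$, so $\sigma(\widehat J_n)=\{\lambda_j\}$, $\sigma(\widehat J_{n-1})=\{\mu_k\}$, $\widehat b_1\cdots\widehat b_n=\beta$, and $\Im\widehat a_n>0$ places the spectrum in the required half-plane. It remains to count: each of the $n-1$ indices $k$ carries an independent binary choice of which root is $|b_nu_{k1}|^2$, and that choice is frozen precisely when the discriminant vanishes, i.e.\ at the $m$ indices where equality holds in~\eqref{solvability.condition.beta.a_n.nonreal}; distinct choices yield distinct weights $|u_{k1}|^2$ and hence distinct matrices, giving $2^{\,n-1-m}=2^{\,n-m-1}$ of them.
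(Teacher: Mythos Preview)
Your proof is correct and follows the same construction as the paper's: cite Theorem~\ref{Theorem.spectrum.necessary.condition} for necessity; for sufficiency, solve the quadratic~\eqref{Theorem.spectrum.necessary.condition.proof.equation1} at each $k$ to obtain the weights, reconstruct the real Jacobi block $\widehat J_{n-1}$ from $\{\mu_k,|u_{k1}|^2\}$ via the classical inverse problem, adjoin $\widehat a_n,\widehat b_{n-1},\widehat b_n$, and run the Schur identity~\eqref{main.rat.func} forward to identify the spectrum; the $2^{n-m-1}$ count is also the same. Two minor differences: you set $\widehat b_{n-1}=|b_{n-1}|$ first and obtain $\widehat b_n$ by division, whereas the paper sets $\widehat b_n=|\widehat b_n|e^{i\arg\beta}$ and then $\widehat b_{n-1}=\beta/(\widehat b_1\cdots\widehat b_{n-2}\widehat b_n)$ --- your consistency check via $D=\widehat b_1\cdots\widehat b_{n-2}$ and the Vieta product is exactly what makes the two orderings equivalent. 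The one genuine addition is your Hermite--Biehler paragraph showing that the sign conditions force $\lambda_n$ into the open upper half-plane and hence $\Im\widehat a_n>0$; the paper's proof simply begins with ``the numbers $\{\lambda_j\}_{j=1}^n$ in the open upper half-plane'' and does not address why the hypothesis $\Im\lambda_j>0$ for $j\leqslant n-1$ together with~\eqref{solvability.condition.beta.a_n.nonreal} already forces this.
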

\begin{proof}
The necessity of the conditions~\eqref{solvability.condition.beta.a_n.nonreal} is
provided by Theorem~\ref{Theorem.spectrum.necessary.condition}.

Suppose that we have a nonreal number $\beta$, distinct real numbers $\{\mu_k\}_{k=1}^{n-1}$,
and the numbers $\{\lambda_j\}_{j=1}^{n}$ in the open upper half-plane, and suppose that
they satisfy~\eqref{solvability.condition.beta.a_n.nonreal}.

We also assume that there are exactly $m$, $0\leqslant m\leqslant
n-1$, equalities in~\eqref{solvability.condition.beta.a_n.nonreal}.
Then one has exactly $2^{n-m-1}$ ways to construct the number
$|\widehat{b}_n|$ and the corresponding values $|u_{k1}|^2$,
$k=1,\ldots,n-1$, by the formul\ae~\eqref{abs.b_n.formula}
and~\eqref{u_k1.formula}. The
conditions~\eqref{solvability.condition.beta.a_n.nonreal} ensure
positivity of the obtained numbers $|\widehat{b}_n|$ and
$|u_{k1}|^2$, $k=1,\ldots,n-1$. Indeed,
\eqref{solvability.condition.beta.a_n.nonreal} imply positivity of
the numbers $(-1)^{n-k}[\chi_{n-1}(\mu_k)+2\Re\beta]$,
$k=1,\ldots,n-1$, as we established in the proof of
Theorem~\ref{Theorem.spectrum.necessary.condition}. At the same
time, positivity of these numbers and the
conditions~\eqref{solvability.condition.beta.a_n.nonreal} obviously
imply positivity of the numbers $X_k^{(1,2)}$ defined
in~\eqref{solutions.new}. Thus, the
conditions~\eqref{solvability.condition.beta.a_n.nonreal} guaranty
the existence of positive numbers $|\widehat{b}_n|$ and
$|u_{k1}|^2$, $k=1,\ldots,n-1$.

With the values $|u_{k1}|^2$, $k=1,\ldots,n-1$, we construct the rational function
\begin{equation}\label{main.rat.func.2}
\sum\limits_{k=1}^{n-1}\dfrac{|u_{k1}|^2}{\lambda-\mu_k}:=\dfrac{\psi(\lambda)}{\chi_{n-1}(\lambda)},
\end{equation}
where the polynomial $\psi(\lambda)$ of degree $n-2$ is uniquely
determined by the numbers $|u_{k1}|^2$ and $\mu_k$,
$k=1,\ldots,n-1$. Moreover, since $|u_{k1}|^2>0$ by construction,
the function $\psi(\lambda)/\chi_{n-1}(\lambda)$ maps the upper
half-plane to the lower half-plane of the complex plane, so the
zeroes of $\psi$ are real and simple and interlace the zeroes of
$\chi_{n-1}$.

If we know the function~\eqref{main.rat.func.2}, we can always reconstruct
a unique Jacobi matrix of the form
\begin{equation}\label{Jacobi.matrix.1}
\begin{pmatrix}
\widehat{c}_1 & \widehat{b}_1 &  0 &\dots&   0   & 0 \\
\widehat{b}_1 & \widehat{c}_2 &\widehat{b}_2 &\dots&   0   & 0 \\
0  &\widehat{b}_2 & \widehat{c}_3 &\dots&   0   & 0 \\
\vdots&\vdots&\vdots&\ddots&\vdots&\vdots\\
0  &  0  &  0  &\dots&\widehat{c}_{n-2}&\widehat{b}_{n-2}\\
0  &  0  &  0  &\dots&\widehat{b}_{n-2}&\widehat{c}_{n-1}\\
\end{pmatrix}=\widehat{J}_{n-1},
\end{equation}
where $\widehat{b}_1,\ldots,\widehat{b}_{n-1}>0$, $\widehat
c_1,\ldots,\widehat c_{n-1}\in\mathbb{R}$. There exist a few
algorithms to make such a
reconstruction~\cite{hochstadt,hald,G,Simon}.

Thus, given the numbers $\mu_k$ and $|u_{k1}|^2$ the rational
function~\eqref{main.rat.func.2} uniquely determines the matrix
$J_{n-1}$ whose eigenvalues are $\mu_k$, while
$\psi(\lambda)=\chi_{n-2}(\lambda)$. Furthermore, we put
\begin{equation*}
\widehat
a_n:=\left(\sum\limits_{j=1}^{n}\lambda_j-\sum\limits_{k=1}^{n-1}\mu_k\right)
\end{equation*}
\begin{equation*}
\widehat{b}_n:=|\widehat{b}_n|e^{i\arg\beta}
\end{equation*}
and
\begin{equation*}
\widehat{b}_{n-1}:=\dfrac{\beta}{\widehat{b}_1\cdots \widehat{b}_{n-2}\widehat{b}_n}.
\end{equation*}
So we constructed a matrix of the form~\eqref{main.matrix.subclass} such
that $\sigma(\widehat{J}_{n-1})=\{\mu_1,\ldots,\mu_{n-1}\}$ and $\widehat{b}_1\cdots
\widehat{b}_n=\beta$. To finish the proof it suffices to establish that
$\sigma(\widehat{J}_{n})=\{\lambda_1,\ldots,\lambda_{n}\}$. To do this we
consider the following formula obtained from~\eqref{main.rat.func.0}--\eqref{submatrix.resolvent.0}
\begin{equation}\label{inverse.odd.rational.f}
\dfrac{\det(\lambda I_n-\widehat{J}_n)}{\det(\lambda
I_{n-1}-\widehat{J}_{n-1})}=\lambda+\widehat a_n-
\sum\limits_{k=1}^{n-1}\dfrac{|\widehat{b}_nu_{k1}+\overline{\widehat{b}}_{n-1}u_{k,n-1}|^2}{\lambda-\mu_k},
\end{equation}
where $u_{k1}$ and $u_{k,n-1}$ are the first and the last entries of the orthonormal eigenvector
$\mathbf{u}_k$ of the submatrix $\widehat{J}_{n-1}$ corresponding to the eigenvalue $\mu_k$, $k=1,\ldots,n-1$. These entries are related as
in~\eqref{formula.vectors},
and $u_{k1}$ satisfies~\eqref{u_k1.formula} by construction. Then the numbers
$\widehat{b}_{n-1}$ and $|u_{k,n-1}|^2$, $k=1,\ldots,n-1$, must satisfy the
equalities~\eqref{abs.b_n-1.formula}--\eqref{u_k,n-1.formula}. Thus
$|\widehat{b}_nu_{k1}|^2$ and $|\overline{\widehat{b}}_{n-1}u_{k,n-1}|^2$
are the solutions of the equation~\eqref{Theorem.spectrum.necessary.condition.proof.equation1}
so we have
\begin{equation*}
|\widehat{b}_nu_{k1}|^2+|\overline{\widehat{b}}_{n-1}u_{k,n-1}|^2=-\dfrac{\chi_{n}(\mu_k)+2\Re\beta}{\chi'_{n-1}(\mu_k)}.
\end{equation*}
This formula together with~\eqref{formula.vectors} gives us
\begin{equation*}
\begin{array}{c}
|\widehat{b}_nu_{k1}+\overline{\widehat{b}}_{n-1}u_{k,n-1}|^2=|\widehat{b}_nu_{k1}|^2+
|\overline{\widehat{b}}_{n-1}u_{k,n-1}|^2+2\Re(\widehat{b}_nu_{k1}\widehat{b}_{n-1}\overline{u}_{k,n-1})=\\
\\
-\dfrac{\chi_{n}(\mu_k)+2\Re\beta}{\chi'_{n-1}(\mu_k)}+\dfrac{2\Re\beta}{\chi'_{n-1}(\mu_k)}=
-\dfrac{\chi_{n}(\mu_k)}{\chi'_{n-1}(\mu_k)}=-\dfrac{\prod\limits_{j=1}^{n}(\mu_k-\lambda_j)}{\prod\limits_{\substack
{r=1\\r\neq k}}^{n-1}(\mu_k-\mu_r)}
\end{array}
\end{equation*}
so $\det(\lambda I_n-\widehat{J}_n)=\prod\limits_{j=1}^{n}(\lambda-\lambda_j)$, as required.
\end{proof}
\begin{theorem}\label{Theorem.inverse.beta.nonreal.a_n.real}
Let $\beta$ be a nonreal number, and let $\{\mu_k\}_{k=1}^{n-1}$ and
$\{\lambda_j\}_{j=1}^{n}$ be sets of distinct real numbers with no common
elements.

Then there exists a matrix $\widehat{J}_n$ in the class
$\widehat{{\cal J}}_n$ with $\widehat{a}_n\in\mathbb{R}$ and
$\beta=\widehat{b}_1\cdots \widehat{b}_n$ such that
$\sigma(\widehat{J}_n)=\{\lambda_1,\ldots,\lambda_n\}$ and
$\sigma(\widehat{J}_{n-1})=\{\mu_1,\ldots,\mu_{n-1}\}$ if and only
if the conditions~\eqref{solvability.condition.beta.a_n.nonreal}
hold.
%
%
%

Moreover, if there are $m$ $(0\leqslant m\leqslant n-1)$ equalities
in~\eqref{solvability.condition.beta.a_n.nonreal}
, then there exist exactly $2^{n-m-1}$ matrices in the class
$\widehat{{\cal J}}_n$ with these spectral data.
\end{theorem}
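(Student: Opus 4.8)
The plan is to mirror the proof of Theorem~\ref{Theorem.inverse.beta.a_n.nonreal}, adapting only the part that controls the location of the eigenvalues $\lambda_j$. As before, necessity of~\eqref{solvability.condition.beta.a_n.nonreal} is supplied directly by Theorem~\ref{Theorem.spectrum.necessary.condition}, so the entire content is the sufficiency (reconstruction) direction together with the counting of matrices. First I would assume the $2n$ data $\beta$ (nonreal), the distinct reals $\{\mu_k\}_{k=1}^{n-1}$, and the distinct reals $\{\lambda_j\}_{j=1}^{n}$ with no common element, satisfying~\eqref{solvability.condition.beta.a_n.nonreal}, and set $\chi_n(\lambda)=\prod_{j=1}^n(\lambda-\lambda_j)$ and $\chi_{n-1}(\lambda)=\prod_{k=1}^{n-1}(\lambda-\mu_k)$. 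Exactly as in the previous theorem, the hypothesis~\eqref{solvability.condition.beta.a_n.nonreal} guarantees that each of the $2^{n-m-1}$ admissible sign choices in~\eqref{abs.b_n.formula}--\eqref{u_k1.formula} yields \emph{positive} values $|\widehat b_n|^2$ and $|u_{k1}|^2$, $k=1,\ldots,n-1$. I would then reuse verbatim the reconstruction of the Jacobi block: form the rational function~\eqref{main.rat.func.2}, whose residues are the positive numbers $|u_{k1}|^2$, and invoke the classical Jacobi inverse algorithm~\cite{hochstadt,hald,G,Simon} to produce the real symmetric tridiagonal matrix $\widehat J_{n-1}$ of the form~\eqref{Jacobi.matrix.1} with real $\widehat c_j$ and positive $\widehat b_1,\ldots,\widehat b_{n-1}$ and $\sigma(\widehat J_{n-1})=\{\mu_1,\ldots,\mu_{n-1}\}$.

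The one genuine difference from Theorem~\ref{Theorem.inverse.beta.a_n.nonreal} is the recovery of $\widehat a_n$, which here must be \emph{real}. The key observation I would exploit is Remark~\ref{remark.imaginary.part.charpoly}: writing $\chi_n$ in the form~\eqref{char.poly.full.formula}, the imaginary part of $\chi_n$ equals $-\Im(\widehat a_n)\,\chi_{n-1}$, so imposing that all $\lambda_j$ be real forces $\Im\widehat a_n=0$ and, conversely, makes every resulting $\lambda_j$ real. Concretely I would set, exactly as before,
\begin{equation*}
\widehat a_n:=\sum_{j=1}^{n}\lambda_j-\sum_{k=1}^{n-1}\mu_k,
\end{equation*}
which is now automatically \emph{real} because both spectra are real, together with $\widehat b_n:=|\widehat b_n|e^{i\arg\beta}$ and $\widehat b_{n-1}:=\beta/(\widehat b_1\cdots\widehat b_{n-2}\widehat b_n)$. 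This produces a matrix $\widehat J_n\in\widehat{\mathcal J}_n$ with $\widehat a_n\in\mathbb R$ and $\widehat b_1\cdots\widehat b_n=\beta$ by construction.

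It then remains to verify $\sigma(\widehat J_n)=\{\lambda_1,\ldots,\lambda_n\}$. For this I would run the identical computation that closes the previous proof: starting from the Schur-complement identity~\eqref{inverse.odd.rational.f}, note that $|\widehat b_n u_{k1}|^2$ and $|\overline{\widehat b}_{n-1}u_{k,n-1}|^2$ are the two roots $X_k^{(1,2)}$ of~\eqref{Theorem.spectrum.necessary.condition.proof.equation1}, whose sum is $-(\chi_n(\mu_k)+2\Re\beta)/\chi'_{n-1}(\mu_k)$; combining this with~\eqref{formula.vectors}, which contributes the cross term $2\Re(\widehat b_n u_{k1}\widehat b_{n-1}\overline u_{k,n-1})=2\Re\beta/\chi'_{n-1}(\mu_k)$, collapses the residue to $-\chi_n(\mu_k)/\chi'_{n-1}(\mu_k)$, and hence the right-hand side of~\eqref{inverse.odd.rational.f} equals $\chi_n(\lambda)/\chi_{n-1}(\lambda)$. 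This gives $\det(\lambda I_n-\widehat J_n)=\chi_n(\lambda)=\prod_{j=1}^n(\lambda-\lambda_j)$, as required. The hypothesis that $\{\mu_k\}$ and $\{\lambda_j\}$ share no common element is precisely what forces each $\alpha_k>0$ (equivalently, none of the $\chi_n(\mu_k)$ vanishes when $m<n-1$, and when an equality occurs it is only the boundary of the interlacing, not a genuinely shared eigenvalue), which is the real-$a_n$ analogue of the open-half-plane condition and keeps the residue formula well defined. The counting of exactly $2^{n-m-1}$ matrices is then identical: $m$ equalities in~\eqref{solvability.condition.beta.a_n.nonreal} collapse the $\pm$ choice in~\eqref{solutions.new} at $m$ indices, and the global constraint~\eqref{Theorem.spectrum.necessary.condition.proof.8} removes one further degree of freedom, leaving $2^{n-m-1}$ admissible assignments, each producing a distinct matrix of $\widehat{\mathcal J}_n$. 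The only point requiring slight care—the main (if modest) obstacle—is confirming that the real choice of $\widehat a_n$ is forced and does not introduce spurious nonreal eigenvalues, which is exactly what Remark~\ref{remark.imaginary.part.charpoly} settles.
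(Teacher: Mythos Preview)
Your approach is precisely the paper's: the authors simply remark that Theorems~\ref{Theorem.inverse.beta.nonreal.a_n.real}--\ref{Theorem.inverse.beta.real.a_n.nonreal} ``can be proved in the same way as we proved Theorem~\ref{Theorem.inverse.beta.a_n.nonreal}'', and you have carried that out faithfully, including the Schur-complement/residue computation that identifies $\det(\lambda I_n-\widehat J_n)$ with $\chi_n(\lambda)$.

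Two small corrections to the exposition are in order. First, your justification of the count $2^{n-m-1}$ is off: there are only $n-1$ indices $k$ in~\eqref{solutions.new}, so with $m$ collapsed sign choices you get $2^{(n-1)-m}=2^{n-m-1}$ directly; the constraint~\eqref{Theorem.spectrum.necessary.condition.proof.8} does \emph{not} remove an additional degree of freedom---it merely defines $|\widehat b_n|^2$ once the $X_k$ have been chosen. Second, your remark that ``no common elements forces each $\alpha_k>0$'' is slightly misplaced: for nonreal $\beta$, formula~\eqref{residues.1} already gives $\alpha_k>0$ unconditionally; the disjointness of $\{\mu_k\}$ and $\{\lambda_j\}$ is rather a \emph{consequence} of the strict inequality $(-1)^{n-k}\chi_n(\mu_k)>0$ in~\eqref{solvability.condition.beta.a_n.nonreal} (equalities there refer to $|\chi_n(\mu_k)+2\Re\beta|=2|\beta|$, not to $\chi_n(\mu_k)=0$). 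Neither point affects the validity of the argument.
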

%
%

For real nonzero $\beta$ we have the following
results.

\begin{theorem}\label{Theorem.inverse.beta.a_n.real}
Let $\beta$ be a real nonzero number, and let $\{\mu_k\}_{k=1}^{n-1}$ be a set of
real distinct numbers. Suppose that $\{\lambda_j\}_{j=1}^{n}$  is a set of complex
numbers in the \textbf{closed} upper half-plane of the complex plane, $\Im\lambda_j\geqslant0$, $j=1,\ldots,n-1$.

Then there exists a matrix $\widehat{J}_n$ in the class $\widehat{{\cal J}}_n$
with $\Im \widehat{a}_n>0$ and $\beta=\widehat{b}_1\cdots \widehat{b}_n$ such that
$\sigma(\widehat{J}_n)=\{\lambda_1,\ldots,\lambda_n\}$
and $\sigma(\widehat{J}_{n-1})=\{\mu_1,\ldots,\mu_{n-1}\}$ if and only if
\begin{equation}\label{solvability.condition.beta.a_n.real.1}
(-1)^{n-k}\chi_n(\mu_k)\geqslant0,\quad k=1,\ldots,n-1,
\end{equation}
and
\begin{equation}\label{solvability.condition.beta.a_n.real.2}
|\chi_n(\mu_k)|\geqslant4(-1)^{n-k-1}\beta,\quad k=1,\ldots,n-1,
\end{equation}
where $\chi_n(\lambda)=\prod\limits_{j=1}^{n}(\lambda-\lambda_j)$.

Moreover, if there are exactly $m_1$ equalities
in~\eqref{solvability.condition.beta.a_n.real.1} and exactly $m_2$
equalities in~\eqref{solvability.condition.beta.a_n.real.2}
$(0\leqslant m_1+m_2\leqslant n-1)$, then there exist exactly
$2^{n-m_1-m_2-1}$ matrices in the class $\widehat{{\cal J}}_n$ with
these spectral data.
\end{theorem}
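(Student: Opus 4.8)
The plan is to mirror the structure of the proof of Theorem~\ref{Theorem.inverse.beta.a_n.nonreal}, adapting it to the real-$\beta$ setting where the discriminant factorizes as $\chi_n(\mu_k)[\chi_n(\mu_k)+4\beta]$ and where, crucially, some of the $\lambda_j$ may be real (they lie in the \emph{closed} upper half-plane). Necessity is immediate from Theorem~\ref{Theorem.spectrum.necessary.condition}, specifically the inequalities~\eqref{spectrum.necessary.condition.real.2}. For sufficiency, I would start from the given data $\beta\in\mathbb{R}\setminus\{0\}$, the distinct reals $\{\mu_k\}$, and the numbers $\{\lambda_j\}$ in the closed upper half-plane satisfying~\eqref{solvability.condition.beta.a_n.real.1}--\eqref{solvability.condition.beta.a_n.real.2}, and use the real-$\beta$ formul\ae~\eqref{abs.b_n.formula.beta.real} and~\eqref{u_k1.formula.beta.real} to construct the candidate $|\widehat{b}_n|$ and the values $|u_{k1}|^2$. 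The conditions~\eqref{solvability.condition.beta.a_n.real.1}--\eqref{solvability.condition.beta.a_n.real.2} must be shown to guarantee positivity of these numbers exactly as in the previous proof, via the equivalence established in Theorem~\ref{Theorem.spectrum.necessary.condition}.

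Next I would reconstruct the real Jacobi submatrix $\widehat{J}_{n-1}$ of the form~\eqref{Jacobi.matrix.1} from the rational function $\sum_k |u_{k1}|^2/(\lambda-\mu_k)=\psi(\lambda)/\chi_{n-1}(\lambda)$ using the standard algorithm (this determines $\widehat{c}_1,\ldots,\widehat{c}_{n-1}$ and $\widehat{b}_1,\ldots,\widehat{b}_{n-2}>0$), and then set $\widehat{a}_n:=\sum_j\lambda_j-\sum_k\mu_k$, $\widehat{b}_n:=|\widehat{b}_n|\,\sgn\beta$ (since $\beta$ is real, the phase of $\widehat{b}_n$ is just its sign), and $\widehat{b}_{n-1}:=\beta/(\widehat{b}_1\cdots\widehat{b}_{n-2}\widehat{b}_n)$. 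By construction $\widehat{b}_1\cdots\widehat{b}_n=\beta$ and $\sigma(\widehat{J}_{n-1})=\{\mu_1,\ldots,\mu_{n-1}\}$. The final verification that $\sigma(\widehat{J}_n)=\{\lambda_1,\ldots,\lambda_n\}$ proceeds through the resolvent identity~\eqref{inverse.odd.rational.f}: using Remark~\ref{Remark.solutions} and the fact that $|\widehat{b}_nu_{k1}|^2$ and $|\widehat{b}_{n-1}u_{k,n-1}|^2$ are the two roots of~\eqref{Theorem.spectrum.necessary.condition.proof.equation1}, one computes $|\widehat{b}_nu_{k1}+\overline{\widehat{b}}_{n-1}u_{k,n-1}|^2=-\chi_n(\mu_k)/\chi'_{n-1}(\mu_k)$, which forces $\det(\lambda I_n-\widehat{J}_n)=\prod_j(\lambda-\lambda_j)$.

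The main obstacle, and the point where this theorem genuinely differs from Theorem~\ref{Theorem.inverse.beta.a_n.nonreal}, is the counting argument and the handling of equalities. Here there are \emph{two} families of equality constraints: the $m_1$ indices where $(-1)^{n-k}\chi_n(\mu_k)=0$ (equivalently $\chi_n(\mu_k)=0$, i.e.\ some $\lambda_j$ coincides with $\mu_k$) and the $m_2$ indices where $|\chi_n(\mu_k)|=4(-1)^{n-k-1}\beta$ (equivalently the discriminant $\chi_n(\mu_k)[\chi_n(\mu_k)+4\beta]$ vanishes). At an index $k$ where either equality holds, the two roots $X_k^{(1,2)}$ in~\eqref{solutions.new} coincide, so the $\pm$ sign choice collapses and contributes no branching; at each of the remaining $n-1-m_1-m_2$ \emph{strict} indices the two roots are distinct and positive, giving a genuine binary choice. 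I would argue that these sign choices are independent except for one global constraint — the overall normalization~\eqref{Theorem.spectrum.necessary.condition.proof.8} that fixes $\sum_k X_k^{(1,2)}=|b_n|^2$ couples the branches, but as in the proof of Theorem~\ref{Theorem.inverse.beta.a_n.nonreal} it removes exactly one degree of freedom — yielding precisely $2^{n-m_1-m_2-1}$ distinct matrices.

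One subtlety I would flag explicitly is the case $\Im\lambda_j>0$ mixed with real $\lambda_j$: since $\widehat{a}_n$ is forced to have $\Im\widehat{a}_n>0$, the theorem asserts that the reconstructed matrix lands in the correct half-plane configuration, and this is consistent with Theorem~2 (the $\beta\in\mathbb{R}$ spectral theorem), which says that for nonreal $\widehat{a}_n$ the real eigenvalues of $\widehat{J}_n$ are exactly the shared eigenvalues with $\widehat{J}_{n-1}$. Thus a real $\lambda_j$ in the data is admissible precisely when it equals some $\mu_k$, which is exactly what an equality in~\eqref{solvability.condition.beta.a_n.real.1} encodes; I would verify this compatibility to confirm the construction is well-posed and that no spurious real eigenvalue is introduced.
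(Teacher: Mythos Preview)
Your proposal is correct and follows exactly the route the paper takes: the paper states only that Theorems~\ref{Theorem.inverse.beta.nonreal.a_n.real}--\ref{Theorem.inverse.beta.real.a_n.nonreal} ``can be proved in the same way as we proved Theorem~\ref{Theorem.inverse.beta.a_n.nonreal}'', and your adaptation does precisely this, substituting the real-$\beta$ formul\ae~\eqref{abs.b_n.formula.beta.real}--\eqref{u_k,n-1.formula.beta.real} for their nonreal-$\beta$ counterparts and tracking the two separate families of equality cases.

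One small correction to your counting argument: the normalization~\eqref{Theorem.spectrum.necessary.condition.proof.8} does \emph{not} remove a degree of freedom --- it \emph{defines} $|\widehat{b}_n|^2$ as the sum of the chosen $X_k$'s rather than constraining them, and the same is true in the proof of Theorem~\ref{Theorem.inverse.beta.a_n.nonreal}. The exponent $n-m_1-m_2-1$ arises simply because there are $n-1$ indices $k$, of which $m_1+m_2$ give coinciding roots $X_k^{(1)}=X_k^{(2)}$ in~\eqref{solutions.new}, leaving $(n-1)-(m_1+m_2)$ genuine binary sign choices and hence $2^{\,n-1-m_1-m_2}=2^{\,n-m_1-m_2-1}$ matrices.
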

\begin{theorem}\label{Theorem.inverse.beta.real.a_n.nonreal}
Let $\beta$ be a real number, and let $\{\mu_k\}_{k=1}^{n-1}$ and
$\{\lambda_j\}_{j=1}^{n}$ be sets of distinct real numbers that may have
common elements.

Then there exists a matrix $\widehat{J}_n$ in the class $\widehat{{\cal J}}_n$
with $\widehat{a}_n\in\mathbb{R}$ and $\beta=\widehat{b}_1\cdots \widehat{b}_n$ such that
$\sigma(\widehat{J}_n)=\{\lambda_1,\ldots,\lambda_n\}$
and $\sigma(\widehat{J}_{n-1})=\{\mu_1,\ldots,\mu_{n-1}\}$ if and only if
the numbers $\{\mu_k\}_{k=1}^{n-1}$ and $\{\lambda_j\}_{j=1}^{n}$ satisfy
the inequalities~\eqref{solvability.condition.beta.a_n.real.1}--\eqref{solvability.condition.beta.a_n.real.2}.

Moreover, if there are exactly $m_1$ equalities
in~\eqref{solvability.condition.beta.a_n.real.1} and exactly $m_2$
equalities in~\eqref{solvability.condition.beta.a_n.real.2}
$(0\leqslant m_1+m_2\leqslant n-1)$, then there exist exactly
$2^{n-m_1-m_2-1}$ matrices in the class $\widehat{{\cal J}}_n$ with
these spectral data.
\end{theorem}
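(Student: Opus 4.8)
The plan is to run the argument of Theorem~\ref{Theorem.inverse.beta.a_n.nonreal} in its real incarnation, replacing the general formul\ae~\eqref{solutions.new} by their specializations \eqref{abs.b_n.formula.beta.real}--\eqref{u_k,n-1.formula.beta.real}. Necessity of \eqref{solvability.condition.beta.a_n.real.1}--\eqref{solvability.condition.beta.a_n.real.2} is already furnished by Theorem~\ref{Theorem.spectrum.necessary.condition} in the case $\beta\in\mathbb{R}\setminus\{0\}$, so only sufficiency and the multiplicity count remain. I set $\chi_n(\lambda)=\prod_{j=1}^n(\lambda-\lambda_j)$ and $\chi_{n-1}(\lambda)=\prod_{k=1}^{n-1}(\lambda-\mu_k)$, both with real coefficients, and recall that \eqref{solvability.condition.beta.a_n.real.1}--\eqref{solvability.condition.beta.a_n.real.2} are precisely \eqref{signs.chi_n.beta.pos}--\eqref{signs.chi_n.beta.neg}, as was observed inside the proof of Theorem~\ref{Theorem.spectrum.necessary.condition}.

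First I would analyze, for each $k$, the quadratic \eqref{Theorem.spectrum.necessary.condition.proof.equation1}, whose roots are given by \eqref{u_k1.formula.beta.real}. Its discriminant is $\chi_n(\mu_k)\bigl(\chi_n(\mu_k)+4\beta\bigr)$, which vanishes in exactly two disjoint ways (disjoint since $\beta\neq0$): when $\chi_n(\mu_k)=0$, an equality in \eqref{solvability.condition.beta.a_n.real.1}, or when $\chi_n(\mu_k)=-4\beta$, an equality in \eqref{solvability.condition.beta.a_n.real.2}. Hence exactly $m_1+m_2$ indices carry a double root and the remaining $(n-1)-m_1-m_2$ indices carry two distinct roots, so selecting the sign $\pm$ in \eqref{u_k1.formula.beta.real} at each nondegenerate index produces $2^{\,n-m_1-m_2-1}$ candidate configurations. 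The crucial point is strict positivity of the chosen roots: exactly as in the proof of Theorem~\ref{Theorem.spectrum.necessary.condition}, \eqref{solvability.condition.beta.a_n.real.1}--\eqref{solvability.condition.beta.a_n.real.2} force $(-1)^{n-k}(\chi_n(\mu_k)+2\beta)>0$ (equality would make the discriminant negative), and since the product of the roots is $\beta^2/[\chi'_{n-1}(\mu_k)]^2>0$ both roots are strictly positive whenever the discriminant is $\geqslant0$; at a common eigenvalue $\chi_n(\mu_k)=0$ the sign bookkeeping of \eqref{signs.chi_n.beta.pos}--\eqref{signs.chi_n.beta.neg} gives $(-1)^{n-k}\beta>0$, and at $\chi_n(\mu_k)=-4\beta$ it gives $(-1)^{n-k-1}\beta>0$, so the surviving double root is positive as well. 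Thus each configuration yields positive $X_k$, whence $|\widehat{b}_n|^2=\sum_k X_k>0$ by \eqref{Theorem.spectrum.necessary.condition.proof.8} and $|u_{k1}|^2=X_k/|\widehat{b}_n|^2>0$.

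From the positive residues $|u_{k1}|^2$ I would build the function $\sum_k |u_{k1}|^2/(\lambda-\mu_k)=\psi(\lambda)/\chi_{n-1}(\lambda)$ as in \eqref{main.rat.func.2}; since the residues are positive and sum to $1$, its zeros are real, simple and interlace the $\mu_k$, and the classical inverse Jacobi algorithm \cite{hochstadt,hald,G,Simon} reconstructs a unique real tridiagonal $\widehat{J}_{n-1}$ with $\widehat{b}_1,\dots,\widehat{b}_{n-2}>0$, $\widehat{c}_k\in\mathbb{R}$, and $\sigma(\widehat{J}_{n-1})=\{\mu_k\}$. I then close the matrix by $\widehat{a}_n=\sum_{j=1}^n\lambda_j-\sum_{k=1}^{n-1}\mu_k$, $\widehat{b}_n=|\widehat{b}_n|e^{i\arg\beta}$ and $\widehat{b}_{n-1}=\beta/(\widehat{b}_1\cdots\widehat{b}_{n-2}\widehat{b}_n)$. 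Because all of $\beta,\lambda_j,\mu_k$ are real, $\widehat{a}_n\in\mathbb{R}$ and $\arg\beta\in\{0,\pi\}$, so this normalization makes $\widehat{b}_n=\mathrm{sign}(\beta)|\widehat{b}_n|$ real and $\widehat{b}_{n-1}=|\beta|/(\widehat{b}_1\cdots\widehat{b}_{n-2}|\widehat{b}_n|)>0$; consequently $\widehat{J}_n$ is in fact a real symmetric periodic Jacobi matrix of the form \eqref{main.matrix}, it lies in $\widehat{\mathcal{J}}_n$, and by construction $\sigma(\widehat{J}_{n-1})=\{\mu_k\}$ and $\widehat{b}_1\cdots\widehat{b}_n=\beta$.

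It remains to verify $\sigma(\widehat{J}_n)=\{\lambda_j\}$. Here I invoke Remark~\ref{Remark.solutions}: the two roots of \eqref{Theorem.spectrum.necessary.condition.proof.equation1} are precisely $|\widehat{b}_nu_{k1}|^2$ and $|\overline{\widehat{b}}_{n-1}u_{k,n-1}|^2$, so the sign fixing $|u_{k1}|^2$ forces $|u_{k,n-1}|^2$ and $|\widehat{b}_{n-1}|$ through \eqref{u_k,n-1.formula.beta.real}. Since $\widehat{J}_{n-1}$ is real symmetric, $u_{k1},u_{k,n-1}$ are real and satisfy \eqref{formula.vectors} with $\widehat{b}_1\cdots\widehat{b}_n=\beta$, so the cross term equals $2\beta/\chi'_{n-1}(\mu_k)$ and, adding the two roots,
\[
|\widehat{b}_nu_{k1}+\overline{\widehat{b}}_{n-1}u_{k,n-1}|^2
=-\frac{\chi_n(\mu_k)+2\beta}{\chi'_{n-1}(\mu_k)}+\frac{2\beta}{\chi'_{n-1}(\mu_k)}
=-\frac{\chi_n(\mu_k)}{\chi'_{n-1}(\mu_k)},
\]
exactly as at the end of the proof of Theorem~\ref{Theorem.inverse.beta.a_n.nonreal}. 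Substituting this into the Schur quotient \eqref{inverse.odd.rational.f} collapses it to $\det(\lambda I_n-\widehat{J}_n)=\prod_j(\lambda-\lambda_j)$; at a common eigenvalue $\chi_n(\mu_k)=0$ the $k$-th residue vanishes and $\mu_k$ survives as the shared root, which is the permitted overlap of the two spectra. For the exact count I would note that $X_k=|\widehat{b}_nu_{k1}|^2$ is an invariant of $\widehat{J}_n$ and must be a root of \eqref{Theorem.spectrum.necessary.condition.proof.equation1}; the construction shows $(X_k)\mapsto\widehat{J}_n$ and $\widehat{J}_n\mapsto(X_k)$ are mutually inverse (using $|\widehat{b}_n|^2=\sum_kX_k$), so matrices correspond bijectively to admissible root-tuples, of which there are $2^{\,n-m_1-m_2-1}$. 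The main obstacle I anticipate is precisely the positivity bookkeeping at the degenerate indices together with checking that the phase normalization $\widehat{b}_n=|\widehat{b}_n|e^{i\arg\beta}$ removes the diagonal $\pm1$ sign redundancy (so that the count is $2^{\,n-m_1-m_2-1}$ and not double that); the reconstruction and Schur-formula verification are then routine repetitions of the nonreal case.
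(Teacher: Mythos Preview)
Your proposal is correct and follows exactly the approach the paper itself prescribes: the paper's proof of this theorem is the one-line remark that it ``can be proved in the same way'' as Theorem~\ref{Theorem.inverse.beta.a_n.nonreal}, and your argument is precisely that rerun, with the real-$\beta$ discriminant $\chi_n(\mu_k)\bigl(\chi_n(\mu_k)+4\beta\bigr)$ split into the two degeneracies $\chi_n(\mu_k)=0$ and $\chi_n(\mu_k)=-4\beta$ that account for $m_1$ and $m_2$. Your closing worry about a sign redundancy halving the count is unfounded---the $2^{\,n-1-m_1-m_2}$ root-tuples already correspond bijectively to matrices via the invariance argument you sketched, with no further identification needed.
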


Theorems~\ref{Theorem.inverse.beta.nonreal.a_n.real}--\ref{Theorem.inverse.beta.real.a_n.nonreal}
can be proved in the same way as we proved
Theorem~\ref{Theorem.inverse.beta.a_n.nonreal}. Note that the
numbers $m_1$ and $m_2$ do not exceed a half of $n$ (approximately).
The exact upper bounds on $m_1$ and $m_2$ depend on the parity of
$n$ and on the sign of $\beta$, and can be obtained from the
inequalities~\eqref{solvability.condition.beta.a_n.real.1}--\eqref{solvability.condition.beta.a_n.real.2}.

Theorem~\ref{Theorem.inverse.beta.real.a_n.nonreal} was established
in~\cite[Theorems~6--7]{XJO}. As well as in~\cite[Corollary~8]{XJO}
we note that in
Theorems~\ref{Theorem.inverse.beta.a_n.nonreal}--\ref{Theorem.inverse.beta.real.a_n.nonreal}
the constructed matrix $\widehat{J}_n$ is unique if and only if
$m=n-1$ or $m_1+m_2=n-1$. This is possible for specific
distributions of the $\lambda_j$ and the $\mu_k$.

\section{Conclusions}
In this paper, we showed that the technique developed by Y.-H.\,Xu
and E.-X.\,Jiang \cite{XJO} for periodic Jacobi matrices can be
extended to a class of complex band matrices. We also gave some new
explanations of the technique using the theory of the location of
roots of polynomials. Our results extend the results of Arlinskii
and Tsekhanovskii~\cite{Arlinskii_Tsekhanovskii} (see
also~\cite{Arlinskii_Tsekhanovskii2}) who studied a one-dimensional
imaginary perturbation of symmetric real Jacobi matrices.

We believe that the combination of the methods of the
works~\cite{XJO} and~\cite{Arlinskii_Tsekhanovskii2} can be helpful
to study direct and inverse spectral problems of some other classes
of band matrices.

\end{document}